\renewcommand*{\backref}[1]{}
\renewcommand*{\backrefalt}[4]{%
    \ifcase #1 (Not cited.)%
    \or        (Cited on page~#2.)%
    \else      (Cited on pages~#2.)%
    \fi}
\definecolor{dkgreen}{rgb}{0,0.6,0}
\definecolor{gray}{rgb}{0.5,0.5,0.5}
\definecolor{mauve}{rgb}{0.58,0,0.82}
\tiny\color{gray},
\def\@tocline#1#2#3#4#5#6#7{\relax
  \ifnum #1>\c@tocdepth 
  \else
    \par \addpenalty\@secpenalty\addvspace{#2}%
    \begingroup \hyphenpenalty\@M
    \@ifempty{#4}{%
      \@tempdima\csname r@tocindent\number#1\endcsname\relax
    }{%
      \@tempdima#4\relax
    }%
    \parindent\z@ \leftskip#3\relax \advance\leftskip\@tempdima\relax
    \rightskip\@pnumwidth plus4em \parfillskip-\@pnumwidth
    #5\leavevmode\hskip-\@tempdima
      \ifcase #1
       \or\or \hskip 1em \or \hskip 2em \else \hskip 3em \fi%
      #6\nobreak\relax
    \hfill\hbox to\@pnumwidth{\@tocpagenum{#7}}\par
    \nobreak
    \endgroup
  \fi}
\newtheorem{lemma}{Lemma}[section]
\newtheorem{theorem}[lemma]{Theorem}
\newtheorem{prop}[lemma]{Proposition}
\theoremstyle{definition}
\newtheorem{assumption}[lemma]{Assumption}
\newtheorem{definition}[lemma]{Definition}
\newtheorem{example}[lemma]{Example}
\numberwithin{equation}{section} 
\newcommand{\Z}{\mathbb{Z}}
\newcommand{\CP}{\mathbf{C}P}
\newcommand{\R}{\mathbb{R}}
\newcommand{\D}{\slashed{\mathfrak D}}
\renewcommand{\S}{\mathbb{S}}
\renewcommand{\ss}{\mathfrak{s}}
\title{The Dehn twist on a connected sum of two homology tori}
\author{Haochen Qiu}
\begin{document}

\maketitle

\begin{abstract}
    Kronheimer-Mrowka showed that the Dehn twist along a $3$-sphere in the neck of the connected sum of two $K3$ surfaces is not smoothly isotopic to the identity. The tool they used is the nonequivariant family Bauer-Furuta invariant, and their result requires that the manifolds are simply connected and the signature of one of them is $16 \mod 32$. We generalize the $S^1$-equivariant family Bauer-Furuta invariant to nonsimply connected manifolds, and construct a refinement of this invariant. We use it to show that if $X_1,X_2$ are two homology tori such that their determinants $r_1,r_2$ are odd, then the Dehn twist along a $3$-sphere in the neck of $X_1\# X_2$ is not smoothly isotopic to the identity.
\end{abstract}

\tableofcontents
\section{Introduction}


The goal of this work is to show that, the $S^1$-equivariant family Bauer-Furuta invariant detects nontrivial elements in the smooth mapping class group for a nonsimply connected manifold.

A homology $4$-torus is a smooth $4$-manifold that has the same homology groups as a $4$-dimensional torus $T^4$. 
The connected sum of two manifolds $X_1$ and $X_2$ can be written as 
\[
X_1\# X_2 = (X_1-D^4) \cup_{S^3} ([0,1]\times S^3)\cup_{S^3} (X_2-D^4),
\]
where $[0,1]\times S^3$ is called the neck of the connected sum. The Dehn twist along a $3$-sphere in the neck is a diffeomorphism $d: X_1 \# X_2 \to X_1 \# X_2$ such that $d$ is the identity outside the neck, and on the neck it has the form
\begin{align*}
[0,1]\times S^3 &\to [0,1]\times S^3 \\
(t, s ) &\mapsto (t,\alpha_t(s))
\end{align*}
where $\alpha\in  \pi_1(SO(4), Id) = \Z/2$ is the nontrivial element. It looks like you rotate your head by $2\pi$: Your head and body are in the original position, and the only part that changes is your neck. 

For a homology torus $X$, its cohomology groups are isomorphic to the ones of $T^4$, but the ring structure might be different. Let $\alpha_1,\cdots, \alpha_4$ be a basis of $H^1(X;\Z)$, and define the determinant of $X$ by 
\[
r := \left| \langle \alpha_1 \smile \alpha_2 \smile \alpha_3 \smile \alpha_4 , [X] \rangle\right|
\]
where $[X]$ is the fundamental class. The main theorem of this paper is 
\begin{theorem}\label{main-thm}
If $X1,X_2$ are two homology $4$-tori such that the determinants $r_1,r_2$ of them are odd. Then the Dehn twist along a $3$-sphere in the neck of $X_1\# X_2$ is not smoothly isotopic to the identity.
\end{theorem}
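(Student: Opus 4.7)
The plan is to argue by contradiction. Suppose that the Dehn twist $d$ on $X := X_1 \# X_2$ is smoothly isotopic to the identity; then the mapping torus $E_d \to S^1$ with monodromy $d$ is isomorphic, as a family of smooth $4$-manifolds, to the trivial family $S^1 \times X \to S^1$ (the diffeomorphism is $(t,x) \mapsto (t, h_t(x))$ for any isotopy $h_t$ from $d$ to the identity). Consequently, the refined $S^1$-equivariant family Bauer-Furuta invariant constructed in this paper must take the same value on both families. I would work with a $d$-invariant spin$^c$ structure $\mathfrak{s}$ on $X$, for instance the one with trivial determinant line bundle, so that the parametrized Seiberg-Witten setup descends to each family.

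The central step is a connected sum / gluing formula for the refined invariant. Because $d$ is supported inside the neck $[0,1] \times S^3$, such a formula should express $BF^{S^1}(E_d, \mathfrak{s})$ as the smash product of the refined fiberwise Bauer-Furuta classes of $(X_1, \mathfrak{s}_1)$ and $(X_2, \mathfrak{s}_2)$ together with an ``equivariant twist class'' in an appropriate $S^1$-equivariant stable stem that encodes the nontrivial loop $\alpha \in \pi_1(SO(4)) = \Z/2$; for the trivial family the same formula holds with the zero twist class in place. The refinement defined in this paper uses the abelian $\Z^4$-covers associated with $\pi_1(X_i)$, and upon mod-$2$ reduction is controlled by the determinant $r_i$ of $X_i$, since the latter is exactly the value of $\langle \alpha_1 \smile \alpha_2 \smile \alpha_3 \smile \alpha_4, [X_i]\rangle$ picked out by the refinement. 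When $r_1$ and $r_2$ are both odd, the two fiberwise classes are each nontrivial mod $2$, and the key verification is that the $2$-torsion twist class survives the smash product with them. This would produce a nonzero obstruction which must vanish for the trivial family, yielding the desired contradiction.

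The hard part is establishing the connected sum formula for the refined invariant in the nonsimply connected setting, and in particular verifying that the $\pi_1$-refinement is compatible with the neck decomposition so that the odd-determinant hypothesis implies mod-$2$ nonvanishing of the relevant smash product. This is the direct analog, for homology tori, of the equivariant stable homotopy input that Kronheimer-Mrowka used when $\sigma \equiv 16 \bmod 32$ for $K3 \# K3$; there the Pin$(2)$-equivariance and a signature condition drove the nonvanishing, whereas here the $S^1$-equivariance together with the $\pi_1 = \Z^4$ data must do the job, and the arithmetic trigger becomes the odd-determinant condition. Carefully tracking the $\pi_1$-refinement through the finite-dimensional approximation, and checking that it interacts correctly with the neck gluing, is the main technical obstacle.
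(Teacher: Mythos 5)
Your overall contradiction scheme does match the paper's: one shows the refined $S^1$-equivariant family invariant of the Dehn-twist mapping torus is nonzero, while it must vanish for the mapping torus of any diffeomorphism isotopic to the identity, and the Dehn twist enters through the identification of its mapping torus with the connected-sum family $(X_1\times S^1,\tilde{\mathfrak{s}}_1)\#(X_2\times S^1,\tilde{\mathfrak{s}}_2^\tau)$, the twist being carried by the \emph{family spin structure} rather than by a separate ``twist class'' factor in a gluing formula. However, the core step --- actually proving nonvanishing --- is left as an acknowledged gap, and the mechanism you propose for it would fail as stated. The index bundle over the Picard torus $\mathcal{T}^8$ has $c_2 = \pm r_1[\mathrm{vol}_{\mathcal{T}^4_1}]\pm r_2[\mathrm{vol}_{\mathcal{T}^4_2}]$, so precisely when $r_1,r_2$ are odd, $Sq^4$ acts nontrivially on the Thom space $TF$ and the top ($12$-dimensional) cell is attached to $8$-cells by the Hopf element $\nu$. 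The Atiyah--Hirzebruch spectral sequence then forces the nonequivariant family invariant --- which is exactly the ``smash of the two fiberwise mod-$2$ classes with the circle-direction class'' you want to use --- to vanish; this is a theorem in the paper, not a technicality. So ``the $2$-torsion twist class survives the smash product'' is not merely hard to verify: in the nonequivariant (or naively equivariant, fixed-point-containing) setting it is false, and the odd-determinant hypothesis is what \emph{creates} this obstruction at the same time as it creates the fiberwise nontriviality.

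You also mischaracterize the refinement. It is not built from the abelian $\Z^4$-covers of the $X_i$; it is the ``quotient'' invariant obtained by collapsing the Picard torus (the zero section of the index bundle, which is the $S^1$-fixed locus of the domain) inside $TF$, which is legitimate because the Seiberg--Witten map sends flat connections with zero spinor to zero. That collapse is what makes the $S^1$-action free away from the basepoint, so that a cofiber sequence plus the Adams-type isomorphism $\{S^1_+\wedge X, Y\}^{S^1}\cong\{X,Y\}$ converts the equivariant class into a nonequivariant class on $S^{2\R}\wedge S(G)$, where $S(G)=S(F)/S^1$. The point of this maneuver is the resulting dimension shift: in the new degrees the $\nu$-attaching-map obstruction disappears, the relevant attaching maps of $S(G)$ are shown to be trivial (using $\pi_2(SO(3))=0$), and the Kronheimer--Mrowka/Ruberman--Strle generic-preimage argument identifies the invariant as the generator of $\pi^{10}(S^{12})\cong\Z/2$. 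Your proof would also need, as input, that the Bauer--Furuta class of a single homology torus with odd determinant is $\eta$ (via the Ruberman--Strle perturbation and a CW analysis of $TF_0$); this is itself one of the paper's theorems --- it cannot be quoted from the Seiberg--Witten invariant because $b^+-b_1\geq 2$ fails for homology tori --- so a complete proof must supply it rather than treat the fiberwise mod-$2$ nontriviality as known.
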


Remark that if $X1,X_2$ are standard tori, then the Dehn twist is even not homotopic to the identity, since they are aspherical (see \cite{McCULLOUGH81}). Hence Theorem \ref{main-thm} is only interesting when $X_1\#X_2$ is not aspherical.

The main tool we use is the Bauer-Furuta invariant \cite{BF02}. Its idea is to regard the Seiberg-Witten equation as a $\text{Pin}(2)$-equivariant map and to consider the property of the map. By a finite-dimensional approximation, it is an equivariant stable mapping class for a spin manifold $X$:
\[
\text{BF}^{\text{Pin}(2)} (X, \ss) \in \{ TF_0,  S^{n\mathbb{H}+b^+_2(X)\tilde{\R}} \}^{\text{Pin}(2)}.
\]
where $\ss$ is a $\text{Spin}^c$-structure of $X$, and $TF_0$ is the Thom space of a rank $m$ quarternion bundle over $Pic^{\ss}(X) = T^{b_1(X)}$, such that 
\[
m-n = -\frac{\sigma(X)}{4}
\]
where $\sigma(X)$ is the signature of $X$. 

One can also forget the $\text{Pin}(2)$-action and define the nonequivariant Bauer-Furuta invariant by 
\[
\text{BF}^{\{e\}} (X, \ss):= \text{Res}^{\text{Pin}(2)}_{\{e\}} \text{BF}^{\text{Pin}(2)} (X, \ss) \in \{ TF_0,  S^{4n+b^+_2(X)}\}.
\]
Now one has a sequence of invariants that can detect exotic phenomena: the Seiberg-Witten invariant, the nonequivariant Bauer-Furuta invariant, and the $\text{Pin}(2)$-equivariant Bauer-Furuta invariant. They contain more and more information, but the computations get more and more complicated. 

There are many previous results on the detections of the Bauer-Furuta invariant. Kronheimer-Mrowka~\cite{KM20} used the nonequivariant Bauer-Furuta invariant to show that the Dehn twist along the separating $3$-sphere in the neck of $K3\# K3$ is not smoothly isotopic to the identity. Lin~\cite{Lin23} used the $S^1$-equivariant Bauer-Furuta invariant to show that, the above Dehn twist on $K3\# K3$ is still not smoothly isotopic to the identity after a stabilization by $S^2 \times S^2$. These existing results are all for simply connected manifolds. Notice that the nonequivariant invariant does not work for Lin's result and the result in this paper, but the reasons are different.

Compared with the previous results, there are some issues when we consider homology tori:
\begin{enumerate}
\item[\textbullet] The Bauer-Furuta invariants of homology tori is unknown. The map from the Bauer-Furuta invariant to the Seiberg-Witten invariant is not well-defined because any homology torus doesn't satisfy the condition $b^+ - b_1 \geq 2$ mentioned in \cite{BF02}. Hence one cannot deduce the Bauer-Furuta invariant of a homology torus from its Seiberg-Witten invariant, just as what people did for the $K3$ surface.

\item[\textbullet] To produce a nontrivial family Bauer-Furuta invariant, Proposition 4.1 in \cite{KM20} requires that the signature of the manifold is $16 \mod 32$, and therefore it doesn't work for homology tori. The index of the twisted Dirac operator on any homology torus is zero, which leads to a vanishing twist of the quaternion bundles of a twisted spin family of homology tori. Therefore, the condition on the index of the Dirac operator in \cite{KM20} should be replaced (by a condition on the index bundle, as what we will see).

\item[\textbullet] The family Bauer-Furuta invariant of a homology torus is a stable mapping class from the index bundle to a sphere, which is hard to compute. The $K3$ surface considered in \cite{KM20} and \cite{Lin23}, however, has $b_1 = 0$. Hence the family Bauer-Furuta invariant of it is an element in the stable homotopy group of spheres, which is well known in low dimension ($1$, $2$ and $3$), and moreover, the equivariant version can be computed by algebraic topology. But for homology tori, the domain of their (family) Bauer-Furuta invariants is a possibly nontrivial bundle.
\end{enumerate}

The main theorem of this paper comes from a sequence of results:

First, by a perturbation of the SW equation proposed by Ruberman-Strle~\cite{RS00}, and a computation of the bundle $TF_0$ via the index theorem and the Steenrod square, we get 
\begin{theorem}
If $X$ is a homology torus with odd determinant, and $\ss$ is the trivial structure, then
\[
BF^{\{e\}} (X, \ss) = (0,0,0,0,1) \in 4\Z\oplus\Z/2.
\]
Actually $BF^{\{e\}} (X, \ss)$ is the Hopf element $\eta$.
\end{theorem}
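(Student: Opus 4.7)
The plan is to attack this invariant in three stages: perturb the Seiberg-Witten map so that a finite-dimensional approximation is well-defined, identify the stable homotopy type of the domain $TF_0$, and compute the resulting stable map on each summand of the target.

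First, since a homology $4$-torus has $b^+_2-b_1 = 3-4 = -1$, which is below the threshold $2$ assumed in \cite{BF02}, the standard Seiberg-Witten finite-dimensional approximation is obstructed by a positive-dimensional family of reducibles over the Picard torus. I would adapt the perturbation of Ruberman-Strle \cite{RS00}: add a small self-dual $2$-form so that the perturbed SW map has no reducible zeros. The perturbed map still admits a finite-dimensional approximation and yields a well-defined stable map
\[
BF^{\{e\}}(X,\ss)\colon TF_0 \longrightarrow S^{4n+b^+_2(X)},
\]
and since $\sigma(X)=0$ forces $m=n$, the domain $TF_0$ is the Thom space of a rank-$m$ quaternionic bundle $F_0\to \mathrm{Pic}^\ss(X)\cong T^4$.

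Next I would pin down the stable homotopy type of $TF_0$ via the family index theorem. For the trivial $\mathrm{Spin}^c$ structure, the index bundle is the pushforward along $\pi\colon X\times T^4\to T^4$ of the Poincar\'e line bundle $\mathcal L$, whose first Chern class is $\sum_i \alpha_i\otimes\beta_i$ for a dual basis $\beta_i$ of $H^1(T^4;\Z)$. The virtual rank is $-\sigma(X)/8=0$, and the top Chern character component is a multiple of $r\,\beta_1\beta_2\beta_3\beta_4$. Combining this with Steenrod square computations on the Thom space, I would argue that $F_0$ is stably trivial as a real bundle, so that $TF_0 \simeq T^4_+ \wedge S^{4m}$ stably. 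The target group then becomes
\[
\{T^4_+\wedge S^{4m},\, S^{4m+3}\} \;\cong\; \pi^3_s(T^4_+).
\]
Using the stable splitting $T^4_+ \simeq S^0 \vee 4\,S^1 \vee 6\,S^2 \vee 4\,S^3 \vee S^4$ and the low-degree stable stems $\pi^s_k = 0$ for $k<0$, $\pi^s_0 = \Z$, $\pi^s_1 = \Z/2$, this group is $4\Z\oplus\Z/2$, matching the claimed target.

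Finally I would evaluate $BF^{\{e\}}(X,\ss)$ summand by summand. The four $\Z$-summands arise from the four $3$-cells of $T^4$ and correspond to projections onto integer-valued Seiberg-Witten-type counts; I expect these to vanish by a symmetry argument using the Ruberman-Strle perturbation, or by showing that the relevant $1$-parameter moduli are generically empty. The $\Z/2$-summand from the top cell $S^4$ is the stable Hopf invariant, detectable by $\Sq^2$ on the mapping cone of $BF^{\{e\}}(X,\ss)$. Using the index computation of step two together with the odd-determinant hypothesis, I would show that $\Sq^2$ acts nontrivially on the top mod-$2$ cohomology of the mapping cone, thereby identifying the Bauer-Furuta class with the Hopf element $\eta\in\pi^s_1$.

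The main obstacle is step two: although the virtual rank of $F_0$ is zero, its Chern character has a nonzero top component proportional to $r$, so $F_0$ is nontrivial in $K^0(T^4)$. Producing a stable real trivialization, and thereby a clean decomposition of $TF_0$, requires showing that the $J$-theoretic obstruction vanishes. This is precisely where the odd-determinant hypothesis has to be put to work, in tandem with the family index theorem and the Steenrod square calculation that detects $\eta$.
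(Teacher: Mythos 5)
There is a genuine gap, and it sits exactly where you flagged your ``main obstacle'': your step two cannot be repaired. You propose to show that $F_0$ is stably trivial as a real bundle (or at least fiber-homotopy trivial, i.e.\ that the $J$-theoretic obstruction vanishes), so that $TF_0 \simeq T^4_+ \wedge S^{4m}$. But the odd-determinant hypothesis forces the opposite: $c_2(F_0)=\pm r$ with $r$ odd, hence $w_4(F_0)\equiv c_2(F_0)\equiv 1 \bmod 2$, and by the Thom-space formula $Sq^4(u)=u\,w_4(F_0)\neq 0$, so the Thom class is tied to the top cell by a nontrivial $Sq^4$. Since Stiefel--Whitney classes (and $Sq^4$ on the Thom space) are invariants of stable fiber homotopy type, $F_0$ is not even $J$-trivial, and $TF_0$ does not split as $T^4_+\wedge S^{4m}$. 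So the decomposition you want to use to identify $\{TF_0,S^{4m+3}\}$ with $\pi^3_s(T^4_+)$ is not available; the odd determinant is precisely the hypothesis that destroys it (and, in the connected-sum section of the paper, that same nonvanishing $Sq^4$ is the point, detecting $\nu$-attaching maps).

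The way around this, which is what the paper's proof does, is to observe that no global splitting of $TF_0$ is needed: the target is $S^7$, so after collapsing the $5$-skeleton one has $\{TF_0,S^7\}\cong\{TF_0/TF_0^{(5)},S^7\}$, and in that quotient the only potentially nontrivial attaching maps are from the $8$-cell to the $6$-cells, detected by $Sq^2(ux)=u\,w_2(F_0)\,x$, which vanishes because $w_2(F_0)\equiv c_1(F_0)=0$. The troublesome $Sq^4$ relates the $8$-cell to the $4$-cell, which has been discarded, so $TF_0/TF_0^{(5)}\simeq 6\S^6\vee 4\S^7\vee\S^8$ and the target group is $4\Z\oplus\Z/2$ --- your answer for the group is right, but for the wrong reason. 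For the evaluation, your plan to detect $\eta$ by computing $Sq^2$ on the mapping cone is not carried out by any independent input; the paper instead uses the Ruberman--Strle perturbation geometrically: the preimage of a generic point is a circle with the Lie framing in a fiber of $TF_0$, which gives $\Sigma^5\eta$ on the top cell, and the restrictions to the $7$-cells have degree zero because a nonzero degree would force isolated points in that preimage. Your suggestion that the $\Z$-summands vanish ``by symmetry or empty moduli'' is in the right spirit but needs exactly this one-dimensionality statement to become an argument.
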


Second, we compute the nonequivariant family Bauer-Furuta invariant for the mapping torus of the Dehn twist $d: X_1 \# X_2 \to X_1 \# X_2$. It is denoted by
\[
FBF^{\{e\}} ((X_1 \times S^1, \tilde{\ss}_1) \# (X_2 \times S^1, \tilde{\ss}_2^\tau)) \in \{S^{\R}\wedge TF , S^{2\mathbb{H} + 6\R} \}.
\]
We compute the bundle $F$ by the index theorem, and prove that there exists a Hopf element $\nu$ in the stable CW structure of $TF$. Therefore, by Atiyah-Hirzebruch spectral sequence, 
\[
FBF^{\{e\}} ((X_1 \times S^1, \tilde{\ss}_1) \# (X_2 \times S^1, \tilde{\ss}_2^\tau)) 
\]
must be trivial. This vanishing result is similar to the fact that, a $3$-sphere can not be mapped to $\CP^2$ nontrivially, because the only way to construct a nontrivial map from the $3$-sphere to the $2$-skeleton of $\CP^2$ is to form the Hopf fibration $\eta: S^3 \to S^2$, but the $4$-cell in $\CP^2$ is attached to the $2$-cell by the Hopf element $\eta$.

The solution of this issue is to consider the equivariant mapping class. If $S^1$ acts on $S^3$ by the Hopf fibration and acts on $\CP^2$ trivially, then we have
\begin{align*}
\{ S^3, \CP^2\}^{S^1} &\cong \{ S^3/S^1, \CP^2\} \\
&= \{ S^2, \CP^2\}\\
&\cong \Z
\end{align*}
since the group action is free in the domain (see \cite{Adams} Theorem 5.3). We hope to apply the same trick to the equivariant family Bauer-Furuta invariant $FBF^{S^1}$. However, the $S^1$-action on the domain of this invariant is not free away from the base point.

Therefore, we define a refinement of such invariant, which we call it the quotient invariant $FBF^{S^1}_{quotient}$. Finally, we compute the quotient $S^1$-equivariant family Bauer-Furuta invariant 
\[
BF^{S^1}_{quotient} ((X_1 \times S^1, \tilde{\ss}_1) \# (X_2 \times S^1, \tilde{\ss}_2^\tau)).
\]
By a cofiber sequence we can throw away the fixed points in the equivariant map, and then apply the equivariant Hopf theorem to convert $BF^{S^1}_{quotient} ((X_1 \times S^1, \tilde{\ss}_1) \# (X_2 \times S^1, \tilde{\ss}_2^\tau))$ to a nonequivariant stable mapping class. Now the dimension is changed and the Hopf invariant $\nu$ mentioned above has no effect. Hence we can apply the method of Kronheimer-Mrowka~\cite{KM20}, and conclude that
\begin{theorem}
$BF^{S^1}_{quotient} ((X_1 \times S^1, \tilde{\ss}_1) \# (X_2 \times S^1, \tilde{\ss}_2^\tau))$ is nontrivial.
\end{theorem}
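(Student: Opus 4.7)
The plan is to prove nontriviality by reducing the $S^1$-equivariant class to a nonequivariant one on an orbit space, where the Hopf-element obstruction responsible for the vanishing in the previous theorem disappears.

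First, I would analyze the $S^1$-action on the domain $TF$ and the codomain of the family Bauer-Furuta map. The whole point of the refinement $FBF^{S^1}_{quotient}$ is that after modding out the reducible/fixed-point contributions we are left with a stable class in which the $S^1$-action on the domain is free away from the basepoint. Concretely, I would identify the $S^1$-fixed locus with the subcomplex coming from the reducible solutions, form the cofiber sequence that kills it, and verify that $FBF^{S^1}_{quotient}$ factors through the cofiber. This is the step where the refinement introduced earlier in the paper earns its keep: without it the $S^1$-action is not free off the basepoint and the next step fails.

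Second, with the $S^1$-action now free away from the basepoint on the reduced domain, I would invoke the equivariant Hopf theorem cited from Adams (Theorem 5.3 of \cite{Adams}), which identifies the $S^1$-equivariant stable mapping class group with the nonequivariant one on the orbit space:
\[
\{A, S^{V}\}^{S^1} \cong \{A/S^1, S^{V}\},
\]
when $S^1$ acts trivially on the target sphere $S^V$ and freely on $A$ off the basepoint. Under this identification $FBF^{S^1}_{quotient}$ is converted to a nonequivariant stable map on the orbit of $TF$, and the grading is shifted down by one (the quotient dimension), which is exactly the shift that moves the class out of the stem where $\nu \in \pi_3^s$ obstructs nontriviality.

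Third, I would evaluate the resulting nonequivariant class by applying the connected-sum/gluing formula for Bauer-Furuta invariants to the mapping torus $X_1\times S^1 \# X_2\times S^1$ with the twist by $\tau$. Combining this with the earlier theorem that for each homology torus $X_i$ with odd determinant $BF^{\{e\}}(X_i, \ss_i) = \eta$, the product of the two pieces contributes $\eta\smile\eta$ in the relevant stem on the orbit space. Once the calculation is reduced to this form, the detection argument of Kronheimer-Mrowka in \cite{KM20} — pairing against the appropriate cohomology operation on the target sphere, essentially a $Sq^2$-secondary-operation style test — applies verbatim and yields a nonzero value.

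The main obstacle will be step three: carefully setting up the Künneth-type gluing for $FBF^{S^1}_{quotient}$ through the cofiber constructed in step one, and tracking the twisted stable stem (specified by the index-bundle computation done earlier via the index theorem) in which the image of $FBF^{S^1}_{quotient}$ lives. In particular, one must rule out that the Atiyah-Hirzebruch spectral sequence for this twisted stem introduces a new differential, analogous to the $\nu$-attachment that killed $FBF^{\{e\}}$, that would again swallow the product $\eta\smile\eta$. The dimension shift produced by the orbit quotient is what guarantees this does not happen, but the verification requires a direct inspection of the cellular structure of $TF/S^1$.
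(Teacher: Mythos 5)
Your overall strategy coincides with the paper's: kill the $S^1$-fixed locus by a cofiber sequence (this is what the quotient invariant is for), pass to the orbit space via the free-action identification of equivariant with nonequivariant stable maps, and detect the resulting class using the gluing theorem together with the Kronheimer--Mrowka preimage argument. However, the step you defer as ``the main obstacle'' is precisely the technical heart of the paper's proof, and your proposal does not contain the ideas needed to close it. First, the relevant orbit space is not $TF/S^1$: the $S^1$-action on $TF$ fixes the zero section $\mathcal{T}^8$ and the point at infinity, so after the cofiber step one must identify $TF/(\mathcal{T}^8\cup\{*\})$ with $S^{\R}\wedge S(F)_+$, where $S(F)$ is the sphere bundle of $F$, on which the action is genuinely free; the quotient is then the $S^2$-bundle $S(G)=S(F)/S^1$, the sphere bundle of an $\R^3$-bundle $G$ over $\mathcal{T}^8$. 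Second, to rule out a new obstruction after the dimension shift you must show that the attaching maps of $S^{2\R}\wedge S(G)$ in degrees $9$ through $12$ are stably trivial. The paper does this with two specific arguments you do not supply: the Stiefel--Whitney classes of $G$ vanish, so no Steenrod square and hence no $\eta$-type attachment occurs; and the only remaining candidates, possible $\eta^2$-attachments from the $10$-cell of $S(G)$ to its $7$-cells, are shown to be trivial by a framing argument using $\pi_2(SO(3))=0$. Without these, the Atiyah--Hirzebruch argument does not split the target group and the class could a priori still be killed, so ``the dimension shift guarantees this does not happen'' is an assertion, not a proof.

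Separately, your description of the final detection as a $Sq^2$-secondary-operation pairing is not how either Kronheimer--Mrowka or this paper concludes. The detection is a Pontryagin--Thom preimage computation: the preimage of a generic point under the nonequivariant family invariant is a framed $3$-torus ($\eta\times\eta'$ in a fiber of $TF$ times a Lie-framed circle $\eta''$ coming from $S^{\R}$); dividing by the free $S^1$-action collapses one circle, yielding a framed $2$-torus, which represents the generator $\eta^2$ of $\pi^{10}(S^{12})\cong\Z/2$, i.e.\ the component of the invariant in the top-cell summand of the split group $\binom{8}{0}\pi^{10}(S^{12})\oplus\binom{8}{1}\pi^{10}(S^{11})\oplus\binom{8}{2}\pi^{10}(S^{10})$. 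Your ``$\eta\smile\eta$'' heuristic points in the right direction, but it only lands in the correct stem after the $S^1$-quotient removes one of the three circles, and making that identification precise (together with the cell-structure verification above) is exactly the content your step three leaves out.
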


\subsection*{Acknowledgements}
The author wants thank his advisor Daniel Ruberman for asking the main problem of this work and for his suggestions on the references. The author wants to express gratitude to Jianfeng Lin for a lot of valuable discussions. The author wants to thank Hokuto Konno for his explanation on the condition of globally defined $S^1$-equivariant families Bauer-Furuta invariant for a family of $4$-manifolds. The author wants to thank Yuwen Gu for explaining the method to compute the cohomotopy group of torus by Adam spectral sequence. This work is partially supported by NSF grant DMS-1952790.

\section{The family Bauer-Furata invariant for nonsimply connected manifolds}
In this section, we introduce the definition of the $\text{Pin}(2)$-equivariant Bauer-Furuta invariant for spin families of nonsimply connected manifolds, and a refinement of this invariant.

In \cite{BF02}, Bauer-Furuta introduce a finite dimensional approximation of the $\text{Pin}(2)$-equivariant Seiberg-Witten map. It is an equivariant stable mapping class for a spin manifold, and people call it the Bauer-Furuta invariant. In Baraglia-Konno~\cite{BK_2022_BF}, Kronheimer-Mrowka~\cite{KM20} and Lin~\cite{Lin23}, the family Bauer-Furuta invariant is introduced, but only for simply connected manifolds. For nonsimply connected manifolds, the original definition of Bauer-Furuta has to be generalized to define the family version.

Suppose $X$ is a closed spin $4$-manifold and $B$ is another closed manifold works as the parameter space. Suppose $E_X$ is a smooth family of $X$ over $B$, that is, a smooth bundle with fiber $X$ and base $B$. The fiber of $E_X$ over $b\in B$ would be denoted by $X_b$.

First we recall some definitions of spin structures (see Kronheimer-Mrowka~\cite{KM20} and Lin~\cite{Lin23}):

\begin{definition}
A family spin structure $\ss$ on $E_X$ is a double cover of the vertical frame bundle on $E_X$, suth that it restricts to the standard double cover $Spin(4) \to SO(4)$ on each fiber. A pair $(E_X, \ss)$ is called a spin family.
\end{definition}

\begin{definition}
A bundle isomorphism between two bundles $E_X$ and $E_X'$ over the same base is a diffeomorphism that preserves the fiber, and it restricts to a diffeomorphism on each fiber.
\end{definition}

\begin{definition}
Two spin families $(E_X,\ss)$ and $(E_X',\ss')$ are called isomorphic if there exists a bundle isomorphism $f_*: E_X \to E_X'$ such that $f(\ss)$ is isomorphic to $\ss'$.
\end{definition}

\begin{definition}
Let $\ss$ be a spin structure over $X$. When the base $B$ is a circle, there are two families spin structures over $E_X$ associated to $\ss$. The bundle $E_X$ is a mapping torus of some diffeomorphism $f: X\to X$, which induces a map on the frame bundle $f_*: \text{Fr}(X) \to \text{Fr}(X)$. There are two lifts of $f_*$ to the principle spin bundle $P(X)$ of $X$, and they differ from each other by a deck transformation $\tau: P(X) \to P(X)$. The mapping tori of these two lifts give rise to two families spin structures over $E_X$ associated to $\ss$. When $E_X$ is a product $B \times X$, they are called the \textbf{product spin structure} $\tilde{\ss}$ and the \textbf{twisted spin structure} $\tilde{\ss}^\tau$.
\end{definition}

\begin{example}
Let $(E_{X_1}, \ss_1)$ and $(E_{X_2}, \ss_2)$ be two spin families over circles. Let $\gamma_i$ be a section of $\gamma_i$. To form a family of $X_1 \# X_2$, one needs to choose an identification 
\[
\phi := \bigsqcup_{b\in B} (\phi_b: T_{\gamma_1(b)}(X_1)_b \stackrel{\cong}{\to} T_{\gamma_2(b)}(X_2)_b).
\]
The resulting bundle is denoted by $E_{X_1} \#_\phi E_{X_2}$, and it depends on $\phi$ up to homotopy. There is a unique $\phi$ (up to homotopy) such that $\ss_1$ and $\ss_2$ can be glued together to form a family spin structure on $E_{X_1} \#_\phi E_{X_2}$ (see Lin~\cite{Lin23} Lemma 2.25). The resulting spin family is denoted by $(E_{X_1}, \ss_1) \# (E_{X_2}, \ss_2)$.
\end{example}

Above definitions are the same as the treatment for simply connected manifolds (see Kronheimer-Mrowka~\cite{KM20} and Lin~\cite{Lin23}). In the rest of this section, we would discuss new assumptions and definitions we have to make for possible nonsimply connected manifolds.

We make the following assumptions through out the paper:
\begin{assumption} \label{assumption-holonomy-trivial}
The bundle $E_X$ satisfies:
\begin{enumerate}
\item \label{assumption-H1-trivial}
The action of $\pi_1(B,b)$ on $H^1(X_b,\Z)$ given by the holonomy of the bundle is trivial.
\item \label{assumption-H2-trivial}
The action of $\pi_1(B,b)$ on $H^2(X_b,\Z)$ given by the holonomy of the bundle is trivial.
\end{enumerate}
\end{assumption}
We work in the following settings:
\begin{enumerate}
\item[\textbullet] 
A family spin structure $\ss$ on $E_X$, which is a double cover of the vertical frame bundle on $E_X$, suth that it restricts to the double cover $Spin(4) \to SO(4)$ on each fiber.
\item[\textbullet] 
A family of metrics $g: B \to \text{Met}(X)$.
\item[\textbullet] 
Two quaternion bundles over $E_X$ given by $\ss$ and $g$:
\[
S^\pm := \bigsqcup_{b\in B}  S^\pm_b,
\]
where $S^\pm_b$ are positive and negative spinor bundles on $X_b$ given by $g_b$. Denote the space of spinors (sections of $S^\pm_b$) by $ \Gamma(S^\pm_b) $. Denote the parameterized Dirac operator over $B$ by
\[
\D_{A}(X_b) : \Gamma(S^+_b) \to   \Gamma(S^-_b)
\]
for $b \in B$ and $\text{spin}^c$-connection $A$ on $X_b$.

\item[\textbullet] 
A family of base points, which is a section $*: B \to E_X$.
\item[\textbullet] 
Define the action of the gauge group $\mathcal{G}(X_b) = \text{Map}(X_b, S^1)$ by letting $u\in \mathcal{G}(X_b)$ send $\Psi \in \Gamma(S^\pm_b) $ to $u\Psi \in \Gamma(S^\pm_b) $, and add $udu$ to the connection $1$-forms. Denote the based gauge group by 
\[
\mathcal{G}_0(*_b) := \{u \in \mathcal{G}(X_b)\mid u(*_b) = 1 \}.
\]
\item[\textbullet] Define
\begin{align*}
    \mathcal{B}^+{(\ss, g_b,*_b)} &:=( L^{k,2}(\mathscr{A}(\ss)) \oplus L^{k,2}(\Gamma(S_b^+))) /\mathcal{G}_0(*_b)\\
    \mathcal{B}^-{(\ss,g_b,*_b)} &:=(  L^{k-1,2}(i\Omega^0(X)/\R) \oplus  L^{k-1,2}(i\Omega^{+}(X)) \oplus L^{k-1,2}(\Gamma(S_b^-)))/\mathcal{G}_0(*_b),
\end{align*}
where $ \mathscr{A}(\ss)$ is the space of $U(1)$-connections of the determinant line bundle of the spin structure $\ss$. The ordinary Seiberg-Witten map is 
\begin{align}
    \label{equ:configOnG}
     \mathcal{F}_{(\ss, g_b,*_b)}: \mathcal{C}{(\ss, g_b,*_b)} &\to \mathcal{D}{(\ss, g_b,*_b)}\\
    \label{def:F}\mathcal{F}_{(\ss, g_b,*_b)}\begin{pmatrix}
         A\\
         \Phi\\
    \end{pmatrix}
    &= \begin{pmatrix}
         d^*(A-A_0)\\
         F^{+_{g_b}}_A  - \rho^{-1}(\sigma(\Phi, \Phi))\\
          \D_A \Phi\\
    \end{pmatrix}
\end{align}
where $\rho: \Omega^+(X) \to \mathfrak{su}(S^+_g)$ is the map defined by the Clifford multiplication, and $\sigma$
is the quadratic form given by $\sigma(\Phi, \Phi) = \Phi \otimes \Phi^* - \frac{1}{2} |\Phi|^2 id$, and $F^{+_{g_b}}_A$ is the self dual part of $F_A$ with respect to $g_b$. 
\end{enumerate}

To combine above infomation to a family Bauer-Furuta map, we need the cocycle condition to form the bundle 
\[
\bigsqcup_{b\in B} \mathcal{B}^\pm{(\ss, g_b,*_b)}
\]
in general. But when $B$ is $0$- or $1$-dimensional, we can pick an open cover of $B$ without three open sets overlapping. Hence we have
\begin{prop}
When $B$ is  a circle or a point, we have bundles
\[
  \mathcal{B}^\pm := \bigsqcup_{b\in B} \mathcal{B}^\pm{(\ss, g_b,*_b)}
\]
over $B$, and (\ref{def:F}) induces a bundle map $\mathcal{F}:\mathcal{B}^+ \to \mathcal{B}^-  $.
\end{prop}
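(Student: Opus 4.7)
The plan is to construct the bundles $\mathcal{B}^\pm$ from local trivializations and to exploit $\dim B \le 1$ in order to sidestep any cocycle verification. I would begin by fixing an open cover $\{U_i\}$ of $B$ with no triple overlaps: a single trivializing set when $B$ is a point, and two open arcs when $B=S^1$. Over each $U_i$, choose a smooth trivialization $E_X|_{U_i}\cong U_i\times X$ that lifts the family spin structure $\ss$ to a trivialization $U_i\times \ss_0$ and carries the base-point section $*$ to a constant section $U_i\times\{x_0\}$. Pulling back the family of metrics $g_b$ then gives a smooth map $U_i\to \mathrm{Met}(X)$, and applying the configuration-space construction in parallel over $U_i$ yields a product bundle $U_i\times \mathcal{B}^\pm(\ss_0,g,x_0)\to U_i$.

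On each double overlap $U_{ij}:=U_i\cap U_j$, the change of trivializations is a smooth family of diffeomorphisms $\phi_{ij}:U_{ij}\to \mathrm{Diff}(X,x_0)$ together with a lift to the spin structure. This data acts naturally on every ingredient of $\mathcal{B}^\pm$: it pulls back metrics, $U(1)$-connections on the determinant line of $\ss$, and spinor sections of $S^\pm$. Since the pullback action is equivariant for the based gauge group $\mathcal{G}_0(x_0)$, it descends to a smooth transition map of the quotient fibers. Assumption~\ref{assumption-holonomy-trivial} enters here: triviality of the $\pi_1(B)$-action on $H^1(X_b,\Z)$ and $H^2(X_b,\Z)$ is exactly what ensures that the affine space of connections (and the choice of reference connection $A_0$) can be identified continuously across $U_{ij}$, so the transition maps really are smooth. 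Because the cover has no triple overlaps, the cocycle condition $\phi_{ij}\phi_{jk}\phi_{ki}=\mathrm{id}$ is vacuous, so the transitions assemble automatically into bundles $\mathcal{B}^\pm\to B$.

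For the bundle map, I would observe that each term of the fiberwise map (\ref{def:F}) — the gauge-fixing $d^*(A-A_0)$, the self-dual curvature $F^{+_{g_b}}_A$, the Clifford quadratic form $\sigma(\Phi,\Phi)$, and the Dirac operator $\D_A\Phi$ — is geometrically natural with respect to pullback by spin-lifted diffeomorphisms that pull back the metric. Consequently the local Seiberg-Witten maps $\mathcal{F}|_{U_i}$ intertwine the transition functions $\phi_{ij}$ and glue to a global bundle map $\mathcal{F}:\mathcal{B}^+\to\mathcal{B}^-$. I expect the main subtlety to lie not in the gluing itself but in maintaining compatibility between the spin lift, the base-point condition, and the gauge quotient across overlaps, which is precisely why the pointed trivializations and consistent spin lifts chosen at the outset are essential; once these are in place, the rest of the argument is formal.
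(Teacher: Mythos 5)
Your proposal is correct and follows essentially the same route as the paper: the paper's entire argument is the remark that for $0$- or $1$-dimensional $B$ one can choose an open cover with no triple overlaps, so the cocycle condition is vacuous and the local configuration-space constructions (together with the naturality of each term of the Seiberg--Witten map) glue to bundles $\mathcal{B}^\pm$ and a bundle map $\mathcal{F}$. One small remark: Assumption~\ref{assumption-holonomy-trivial} is not actually needed at this step --- the bundles exist for any holonomy --- and the paper invokes it only afterwards, to trivialize the Picard-torus and $H^2$ bundles when defining the family invariant over a circle.
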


Note that the map $\mathcal F$ is $\text{Pin}(2)$-equivariant, where $\text{Pin}(2)$ acts on $\Gamma(S_b^\pm)$ by the left quaternion multiplication, and acts on forms by reversing the sign.
Note also that over any $b\in B$, the fiber of $\mathcal{B}^+$ is a bundle over the Picard torus $Pic(X) = H^1(X;\R)/H^1(X;\Z)$. When $B$ contains only one point, we can apply the finite-dimensional approximation technique in \cite{BF02} on the map $\mathcal{F} : \mathcal{B}^+ \to \mathcal{B}^-  $. By the classical properness result (see \cite{BF02} Proposition 3.1), we can further convert the map between finite-dimensional vector spaces to a map between compact spaces by the one point compactification. Hence we have 
\begin{prop}[\cite{BF02} Corollary 3.2]
The map $\mathcal{F}$ defines an element 
\[
\text{BF}^{\text{Pin}(2)} (X, \ss)
\]
in the equivariant stable cohomotopy group
\[
 \{ TF_0,  S^{n\mathbb{H}+b^+_2(X)\tilde{\R}} \}^{\text{Pin}(2)},
\]
where $\ss$ is a $\text{Spin}^c$-structure of $X$, and $TF_0$ is the Thom space of a rank $m$ quarternion bundle over $Pic(X) = T^{b_1(X)}$, such that 
\[
m-n = -\frac{\sigma(X)}{4}.
\]
\end{prop}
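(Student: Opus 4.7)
The plan is to follow the finite-dimensional approximation procedure of Bauer-Furuta~\cite{BF02}, adapting it to the nonsimply connected setting where the essential new feature is that the configuration bundles $\mathcal{B}^\pm$ fiber over the Picard torus $\mathrm{Pic}(X) = H^1(X;\R)/H^1(X;\Z) \cong T^{b_1(X)}$ rather than being affine Hilbert spaces. First I would decompose $\mathcal{F} = L + Q$, where $L$ is the linearization $(d^*, d^+, \D_{A_0})$ at a reference $\text{spin}^c$-connection $A_0$ and $Q$ is the remaining quadratic/lower-order term coming from $\rho^{-1}(\sigma(\Phi,\Phi))$ and the holonomy perturbation. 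Over each point $[A] \in \mathrm{Pic}(X)$, the operator $L$ is $\text{Pin}(2)$-equivariantly Fredholm and $Q$ is a compact operator on the Sobolev completions; the fiber of the quaternionic Dirac summand over $[A]$ varies with the flat part of the connection.

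Next I would perform a $\text{Pin}(2)$-equivariant finite-dimensional approximation, uniformly over the compact base $\mathrm{Pic}(X)$. Choose an exhaustion of $\mathcal{B}^-$ by finite-dimensional $\text{Pin}(2)$-invariant subbundles $W_k$ covering the image of $L$ in a controlled sense; by family Fredholm theory, the preimages $U_k := L^{-1}(W_k)$ eventually become $\text{Pin}(2)$-invariant finite-rank vector subbundles of $\mathcal{B}^+$. The spinor direction of $U_k$ inherits a quaternionic structure from the $j$-action of $\text{Pin}(2)$, yielding a quaternionic bundle $F$ over $\mathrm{Pic}(X)$ of quaternionic rank $m$, while the $i\Omega^+$ direction carries the sign representation $\tilde{\R}$. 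The family Atiyah-Singer theorem applied to the Dirac family over $\mathrm{Pic}(X)$ gives the virtual quaternionic index, which yields the identity $m - n = -\sigma(X)/4$ in the normalization of \cite{BF02}.

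Third, I would invoke the properness estimate \cite{BF02} Proposition 3.1. Fiberwise this says that preimages under $\mathcal{F}$ of balls in $\mathcal{B}^-$ are contained in balls in $\mathcal{B}^+$ of controlled radius, uniformly in $k$. Since $\mathrm{Pic}(X)$ is compact, this properness extends uniformly across the base, so that for sufficiently large $k$ the approximating map $U_k \to W_k$ sends the complement of a large disk bundle into a small neighborhood of infinity. Passing to fiberwise one-point compactifications then produces a based $\text{Pin}(2)$-equivariant map from the Thom space $TF_0$ of $F$ to $S^{n\mathbb{H} + b_2^+(X)\tilde{\R}}$.

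The main obstacle is making the finite-dimensional approximation continuous and $\text{Pin}(2)$-equivariant over the entire Picard torus simultaneously, and showing that different choices of the exhaustion $W_k$, the reference connection $A_0$, and the metric $g$ all produce the same $\text{Pin}(2)$-equivariant stable cohomotopy class. These independence statements are the natural family-over-$\mathrm{Pic}(X)$ analogues of the invariance arguments in \cite{BF02}; they require a straight-line homotopy argument between any two approximations together with uniform properness along the homotopy, after which a standard equivariant stable homotopy argument identifies the two resulting classes. Once these are in place, the element $\text{BF}^{\text{Pin}(2)}(X, \ss) \in \{TF_0, S^{n\mathbb{H} + b_2^+(X)\tilde{\R}}\}^{\text{Pin}(2)}$ is well defined.
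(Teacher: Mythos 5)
Your proposal is correct and takes essentially the same route as the paper: the paper gives no independent argument here but simply invokes the finite-dimensional approximation technique and the properness result of \cite{BF02} (Proposition 3.1 and Corollary 3.2), whose construction already works over the Picard torus when the base $B$ is a point, followed by one-point compactification. Your sketch (linear Fredholm plus compact quadratic decomposition, approximation by $\text{Pin}(2)$-invariant finite-rank subbundles over $Pic(X)$, uniform properness over the compact base, compactification, and independence of the choices) is a faithful expansion of exactly that cited argument.
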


Now consider the case where $B$ is a circle. By Assumption \ref{assumption-holonomy-trivial} (\ref{assumption-H2-trivial}), $\pi_1(B)$ acts trivially on $H^1(X;\R)$. Hence the torus bundle 
\[
 \bigsqcup_{b\in B} Pic(X_b)
\]
is trivial. By Assumption \ref{assumption-holonomy-trivial} (\ref{assumption-H2-trivial}), $\pi_1(B)$ acts trivially on $H^2(X;\R)$. Hence a homology orientation would give a trivialization of the bundle (as in \cite{KM20})
\[
\bigsqcup_{b\in B} H^2(X_b;\R).
\] 
Therefore the map $\mathcal{F}$ defines a $\text{Pin}(2)$-equivariant stable map
\[
\mu :  B \times TF \to B \times S^{n\mathbb{H}+b^+_2(X)\tilde{\R}}  \stackrel{pj}{\to} S^{n\mathbb{H}+b^+_2(X)\tilde{\R}}.
\]
By the definition of the Bauer-Furuta map, $\mu$ always sends the base point of $TF$ to the base point of $S^{n\mathbb{H}+b^+_2(X)\tilde{\R}}$. Hence $\mu$ has a lift 
\[
\tilde{\mu}: B_+ \wedge TF \to  S^{n\mathbb{H}+b^+_2(X)\tilde{\R}}
\]
such that the diagram 
\[\begin{tikzcd}
	& {B \times TF} \\
	{B_+ \wedge TF = B \times TF / B \times \{*\}} && {S^{n\mathbb{H}+b^+_2(X)\tilde{\R}}}
	\arrow["pj"', from=1-2, to=2-1]
	\arrow["\mu", from=1-2, to=2-3]
	\arrow["{\tilde{\mu}}", dashed, from=2-1, to=2-3]
\end{tikzcd}\]
commutes.

To simplify the cell structure of the domain, we use the construction as what Lin used for simply connected case (see \cite{Lin23} Definition 2.21). Let $e: S^0 \to S^2$ be the embedding of $S^0$ to the south and the north pole of the sphere. The cone of $e$ would be denoted by $Ce \simeq S^2/S^0 $. One has a quotient map
\[
S^{2\R} \to Ce \simeq S^{\R} \wedge B_+.
\]
and therefore a quotient map 
\[
 S^{2\R}  \wedge TF \stackrel{q}{\longrightarrow} S^{\R} \wedge B_+ \wedge TF.
\]
We have
\begin{prop}
Under Assumption \ref{assumption-holonomy-trivial}, the stable map $\tilde{\mu}\circ q$ defines an element 
\[
\text{BF}^{\text{Pin}(2)} (E_X, \ss)
\]
in the equivariant stable cohomotopy group
\[
 \{ S^\R \wedge TF,  S^{n\mathbb{H}+b^+_2(X)\tilde{\R}} \}^{\text{Pin}(2)},
\]
where $\ss$ is a $\text{Spin}^c$-structure of $X$, and $TF$ is the Thom space of a rank $m$ quarternion bundle over $Pic^{\ss}(X) = T^{b_1(X)}$, such that 
\[
m-n = -\frac{\sigma(X)}{4}.
\]
\end{prop}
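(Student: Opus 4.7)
The construction of the class is essentially spelled out in the paragraphs preceding the proposition; my plan is to verify that the construction actually descends to a well-defined element of the claimed equivariant stable cohomotopy group. The three things to check are (i) a globalization of the \cite{BF02} finite-dimensional approximation to a bundle map over $B=S^{1}$, (ii) that Assumption~\ref{assumption-holonomy-trivial} yields the trivializations needed to identify source and target as single $\text{Pin}(2)$-spaces, and (iii) independence of the class from the auxiliary choices.

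For (i), I would apply the finite-dimensional approximation of Bauer-Furuta fiberwise to $\mathcal{F}:\mathcal{B}^{+}\to\mathcal{B}^{-}$. Because $B=S^{1}$ is one-dimensional, I cover $B$ by two contractible arcs, pick a single $\text{Pin}(2)$-invariant pair $(U,V)$ of finite-dimensional approximating subspaces on each arc, and patch over the overlap by a fiberwise isotopy — this is the same kind of cocycle argument by which the bundles $\mathcal{B}^{\pm}$ were defined and involves no higher-dimensional coherence. Properness on each fiber from \cite{BF02} Proposition~3.1 then gives, after one-point compactification, a fiberwise $\text{Pin}(2)$-equivariant pointed map of the requisite representation spheres.

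For (ii), Assumption~\ref{assumption-holonomy-trivial}(\ref{assumption-H1-trivial}) trivializes $\bigsqcup_{b\in B}Pic^{\ss}(X_{b})\cong B\times T^{b_{1}(X)}$, so the Dirac index assembles into a global virtual quaternion bundle over $Pic^{\ss}(X)$ of virtual rank $m-n=-\sigma(X)/4$ with well-defined Thom space $TF$; Assumption~\ref{assumption-holonomy-trivial}(\ref{assumption-H2-trivial}) together with a homology orientation trivializes $\bigsqcup_{b}H^{+}(X_{b};\R)$, so the target spheres glue to a single $\text{Pin}(2)$-sphere. The sign-representation structure on the $\tilde{\R}$ factor is consistent because the holonomy acts trivially on $H^{+}$ and therefore commutes with the sign character of $\text{Pin}(2)/S^{1}$. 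With these identifications, the construction in the preamble produces $\mu$, then the pointed lift $\tilde{\mu}$, then the precomposition $\tilde{\mu}\circ q$.

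For (iii), I would observe that the space of families of metrics, of approximating subspaces, and of trivializations compatible with a fixed homology orientation is connected, so any two choices yield equivariantly stably homotopic representatives. The main technical obstacle is the $\text{Pin}(2)$-representation bookkeeping when passing to the stable category: one must verify that the pinch $q$, built from $e:S^{0}\to S^{2}$ using only trivial representations, does not twist the $\text{Pin}(2)$-action on $TF$, and that the suspension indexing matches so that $\tilde{\mu}\circ q$ really represents a class in the claimed group $\{S^{\R}\wedge TF,\,S^{n\mathbb{H}+b^{+}_{2}(X)\tilde{\R}}\}^{\text{Pin}(2)}$ rather than a shifted one. This is the same style of bookkeeping carried out by Lin~\cite{Lin23} in the simply connected case, and it carries over directly because the holonomy hypotheses ensure that nothing is twisted by the family structure.
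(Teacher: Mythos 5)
Your proposal is correct and follows essentially the same route as the paper: the paper offers no separate proof beyond the construction in the preceding paragraphs (fiberwise finite-dimensional approximation over the one-dimensional base, trivializations of the Picard-torus and $H^+$ bundles from Assumption \ref{assumption-holonomy-trivial} plus a homology orientation, the pointed lift $\tilde{\mu}$, and precomposition with the pinch $q$ as in Lin's construction), which is exactly what you verify. Your only deviation is a correction of sorts — using part (\ref{assumption-H1-trivial}) for the $H^1$ trivialization where the paper's text cites part (\ref{assumption-H2-trivial}) twice — which is the intended reading.
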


Now we consider a refinement of the family Bauer-Furuta invariant:
\begin{definition}\label{def:quotient_inv}
Denote the zero section of $F$ by $Pic(X)$. We also regard it as a subspace of $TF$. There exists a stable map $\bar \mu: S^{\R}\wedge (TF / Pic(X))\to S^{n\mathbb{H}+b^+_2(X)\tilde{\R}} $ such that the diagram  
\[\begin{tikzcd}
	& {S^\R \wedge TF} \\
	{S^{\R}\wedge (TF / Pic(X))} && {S^{n\mathbb{H}+b^+_2(X)\tilde{\R}}}
	\arrow["pj"', from=1-2, to=2-1]
	\arrow["{\tilde \mu}", from=1-2, to=2-3]
	\arrow["{\bar{\mu}}", dashed, from=2-1, to=2-3]
\end{tikzcd}\]
commutes. The stable map $\bar \mu$ defines an element:
\[
FBF^{\text{Pin}(2)}_\text{quotient} (E_X , \ss) \in \{S^{\R}\wedge (TF / Pic(X)) , S^{n\mathbb{H}+b^+_2(X)\tilde{\R}} \}^{\text{Pin}(2)},
\]
and we call it the quotient ${\text{Pin}(2)}$-equivariant family Bauer-Furuta invariant. Similarly, we define the quotient ${\text{Pin}(2)}$-equivariant Bauer-Furuta invariant to be an element
\[
\text{BF}_\text{quotient}^{\text{Pin}(2)} (X, \ss)
\]
in the equivariant stable cohomotopy group
\[
 \{ TF/Pic(X),  S^{n\mathbb{H}+b^+_2(X)\tilde{\R}} \}^{\text{Pin}(2)}.
\]
\end{definition}
The motivation of this refinement would be illustrated in the following sections.

This invariant works as well as the ordinary Bauer-Furuta invariant, because in the Seiberg-Witten equation (\ref{def:F}), the Picard torus with zero spinor is always sent to zero (while the kernel of the index bundle might be mapped to nonzero self-dual $2$-form). In particular, it detects $\pi_0(\text{Diff}(X))$ as what ordinary Bauer-Furuta invariant does:
\begin{prop}\label{prop:free-invariant-detect-pi-0-Diff}
If $f$ is smoothly isotopic to the identity, and $E_X$ is the mapping torus of $f$, then $\text{FBF}^{\text{Pin}(2)}_\text{quotient} (E_X , \ss)$ is trivial.
\end{prop}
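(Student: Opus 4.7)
The plan is to reduce to a product spin family $(X \times S^1, \tilde{\ss})$ over $S^1$, and then show the quotient invariant is stably null by a Puppe cofiber sequence argument.

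If $f$ is smoothly isotopic to the identity via an isotopy $\phi_t$, then lifting $\phi_t$ to a path in the group of spin-preserving diffeomorphisms produces an isomorphism of spin families $(E_X, \ss) \cong (X \times S^1, \tilde{\ss})$; if the lift terminates at the nontrivial deck transformation of the $\mathrm{Spin}(4) \to SO(4)$ cover, one obtains instead $(X \times S^1, \tilde{\ss}^\tau)$. Since the quotient family Bauer-Furuta invariant is an isomorphism invariant of spin families, it suffices to compute it on a product family. I will write the argument for $\tilde{\ss}$; the twisted case is analogous, with the $\pm 1$ monodromy on the spinor bundles absorbed by the $\text{Pin}(2)$-equivariance of the Seiberg-Witten map.

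For $(X \times S^1, \tilde{\ss})$ with the product metric, the fiberwise Seiberg-Witten equations~(\ref{def:F}) are literally independent of the base parameter $b \in S^1$. Hence, after the finite-dimensional approximation of~\cite{BF02}, the family BF map factors as $\tilde{\mu} = \mu_0 \circ (c_B \wedge \mathrm{id})$, where $\mu_0 : TF \to S^{n\mathbb{H}+b^+_2(X)\tilde{\R}}$ is the nonparametrized BF map and $c_B : B_+ \to S^0$ collapses $B$ to a point. Because $\mu_0$ sends the zero section $Pic(X) \subset TF$ to the basepoint, it descends canonically to $\bar{\mu}_0 : TF/Pic(X) \to S^{n\mathbb{H}+b^+_2(X)\tilde{\R}}$, and the quotient-family lift factors analogously as $\tilde{\bar{\mu}} = \bar{\mu}_0 \circ (c_B \wedge \mathrm{id})$ on $B_+ \wedge (TF/Pic(X))$.

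Unwinding Definition~\ref{def:quotient_inv}, the quotient invariant is represented by the stable composition
\[
S^{2\R} \wedge (TF/Pic(X)) \xrightarrow{q \wedge \mathrm{id}} S^\R \wedge B_+ \wedge (TF/Pic(X)) \xrightarrow{\mathrm{id}_{S^\R} \wedge \tilde{\bar{\mu}}} S^\R \wedge S^{n\mathbb{H}+b^+_2(X)\tilde{\R}},
\]
which, after substituting the factorization of $\tilde{\bar{\mu}}$ from the previous paragraph, becomes $(\mathrm{id}_{S^\R} \wedge \bar{\mu}_0) \circ \bigl((\mathrm{id}_{S^\R} \wedge c_B) \circ q\bigr) \wedge \mathrm{id}_{TF/Pic(X)}$. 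Under the identification $Ce \simeq S^\R \wedge B_+$ recalled in the paper, the map $\mathrm{id}_{S^\R} \wedge c_B : S^\R \wedge B_+ \to S^\R$ coincides (up to the splitting $\Sigma B_+ \simeq \Sigma B \vee S^1$) with the connecting map $\delta : Ce \to \Sigma S^0$ of the Puppe cofiber sequence associated to $e : S^0 \hookrightarrow S^2$. Therefore $(\mathrm{id}_{S^\R} \wedge c_B) \circ q$ is the composition of two consecutive maps in the cofiber sequence $S^0 \to S^2 \to Ce \to \Sigma S^0$, and is stably null; the full composition is then stably null, which proves $\text{FBF}^{\text{Pin}(2)}_\text{quotient}(E_X, \ss)$ is trivial. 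The main obstacle I expect is the reduction step: one must carefully lift the smooth isotopy to the spin diffeomorphism group and verify the naturality of the family BF construction under the resulting isomorphism of spin families, particularly in the twisted case $\tilde{\ss}^\tau$.
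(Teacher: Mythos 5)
Your reduction-plus-collapse argument for the \emph{product} spin structure is essentially the right (and standard) one, and the Puppe-sequence step is fine: for $(X\times S^1,\tilde{\ss})$ with base-independent metric and base point one may choose a base-independent finite-dimensional approximation, so the family map factors through $c_B\wedge\mathrm{id}$, and the composite $S^{2\R}\to S^{\R}\wedge B_+\to S^{\R}$ is an honest map $S^2\to S^1$, hence (stably and equivariantly, since the group acts trivially on these factors) null; smashing with $\mathrm{id}_{TF/Pic(X)}$ and composing with $\bar{\mu}_0$ keeps it null. The paper itself does not write out a proof of this proposition, so there is nothing to compare beyond this standard outline, which you reproduce correctly for the untwisted case.

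The genuine gap is the sentence disposing of the twisted case: ``the $\pm 1$ monodromy on the spinor bundles is absorbed by the $\mathrm{Pin}(2)$-equivariance of the Seiberg--Witten map.'' If $f$ is isotopic to the identity, the pushed-forward family spin structure may well be $\tilde{\ss}^\tau$, and this case cannot be chosen away; it is exactly where the content of the proposition lies, since the whole paper rests on the \emph{half}-twisted connected-sum family having nontrivial invariant while globally twisted product families have trivial invariant. For $\tilde{\ss}^\tau$ the approximated family is the mapping torus of the action of $-1\in\mathrm{Pin}(2)$ on the BF map, so your factorization through $c_B\wedge\mathrm{id}$ fails on the nose; to restore it you must trivialize the domain and target bundles over $B$, and the only available trivializations use a path from $-1$ to $1$, which necessarily lies in $S^1\subset\mathrm{Pin}(2)$. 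Conjugating by $e^{i\pi t}$ fixes the map (it is $S^1$-equivariant), so this honestly reduces the twisted case to the product case for the $S^1$-equivariant and nonequivariant quotient invariants --- which is what the paper ultimately uses --- but the trivialization does not commute with $j$, because $-1$ is not connected to the identity inside the center $\{\pm 1\}$ of $\mathrm{Pin}(2)$. Hence for the $\mathrm{Pin}(2)$-statement as written your one-line absorption does not go through: the twisted family need not be trivial as a family of $\mathrm{Pin}(2)$-spaces, and either a genuinely $\mathrm{Pin}(2)$-equivariant argument must be supplied, or the claim should be proved (and used) only at the level of the $S^1$-equivariant restriction. You should also say explicitly that the quotient invariant is independent of the auxiliary choices (family metric, base-point section), since your argument picks the constant ones.
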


\begin{prop}
Suppose $(E_{X_1}, \ss_1)$ is a spin family over a circle. Suppose $X_2$ is a spin $4$-manifold and $\ss_2$ is a spin structure of $X_2$. Consider the product spin family $(X_2 \times S^1, \tilde{\ss}_2)$. Then 
\begin{equation}\label{equ:gluing}
\text{FBF}^{\text{Pin}(2)}_\text{quotient} ((E_{X_1}, \ss_1) \#(X_2 \times S^1, \tilde{\ss}_2)) = \text{FBF}^{\text{Pin}(2)}_\text{quotient} (E_{X_1}, \ss_1) \wedge \text{BF}_\text{quotient}^{\text{Pin}(2)} (X_2, \ss_2)
\end{equation}
\end{prop}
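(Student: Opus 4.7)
The strategy is to adapt the connected-sum gluing formula for the Bauer-Furuta invariant (originally due to Bauer, and generalized to the family setting in \cite{KM20} and \cite{Lin23}) to the quotient refinement introduced in Definition \ref{def:quotient_inv}. I would carry this out in three steps.

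First, I would stretch the neck $[-T, T] \times S^3$ of the connected sum and let $T \to \infty$. Standard Seiberg-Witten compactness, applied fiberwise over $S^1$, shows that configurations on the fiber $(X_1 \# X_2)_b$ decouple into pairs of configurations on $X_1$ and $X_2$ with matching cylindrical-end asymptotics, and the neck modes contribute suspension factors that become null-homotopic in the limit. Because the second family is the product $X_2 \times S^1$, the geometric data on the $X_2$-side (metric, spin structure, base gauge, base point, perturbations) is constant in $b$, so the parametrized SW map on that factor is the non-parametrized SW map on $(X_2, \ss_2)$, and the family structure over $S^1$ is carried entirely by the $E_{X_1}$ factor.

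Second, I would choose compatible finite-dimensional approximations on the two factors in the sense of \cite{BF02}, adapted to the parametrized setting over $S^1$ via the construction that produced $\tilde \mu \circ q$. Under the additivity of the Dirac index and of $b^+$ under connected sum, the parametrized index bundle over $Pic(X_1 \# X_2) = Pic(X_1) \times Pic(X_2)$ is the external Whitney sum $F_1 \boxplus F_2$, whose Thom space is $TF_1 \wedge TF_2$; moreover $n_{X_1 \# X_2}\mathbb{H} = n_1\mathbb{H} + n_2\mathbb{H}$ and $b^+_2(X_1 \# X_2) = b^+_2(X_1) + b^+_2(X_2)$, so the targets match on both sides of (\ref{equ:gluing}). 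The approximated $\text{Pin}(2)$-equivariant family SW map on the connected sum then factors, up to equivariant stable equivalence, as the smash product of the family map of $E_{X_1}$ with the non-family map of $X_2$.

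Third, I would verify quotient-level compatibility. Each factor SW map kills the zero-spinor locus $Pic(X_i) \subset TF_i$, which is exactly the content of the factorization defining the quotient invariants; their smash therefore vanishes on the larger subspace $(Pic(X_1) \wedge TF_2) \cup (TF_1 \wedge Pic(X_2))$, and in particular on the smaller subspace $Pic(X_1) \times Pic(X_2) = Pic(X_1 \# X_2)$. The evident collapse map
\[
S^{\R} \wedge TF_{X_1 \# X_2}/Pic(X_1 \# X_2) \longrightarrow S^{\R} \wedge (TF_1/Pic(X_1)) \wedge (TF_2/Pic(X_2))
\]
then intertwines the quotient invariant of the connected sum with the smash product on the right-hand side, giving (\ref{equ:gluing}) as an equality of $\text{Pin}(2)$-equivariant stable maps.

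The main obstacle will be this third step. One must choose base gauge, base-point sections, and finite-dimensional cutoffs consistently across the two factors so that the approximation factors through the correct quotient $\text{Pin}(2)$-equivariantly, and so that the identification used to state (\ref{equ:gluing}) (through the collapse map above) is unambiguous at the level of stable homotopy classes. The neck-stretching argument must also be carried out in a way compatible with the base-point section $*: S^1 \to E_{X_1}$, which one arranges by placing the section away from the connected-sum region in each fiber.
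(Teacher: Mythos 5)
Your overall architecture differs from the paper's in a way that matters. The paper does not re-prove the gluing theorem at all: it cites Bauer's gluing theorem and its family extensions in \cite{KM20} and \cite{Lin23}, which already give Equation (\ref{equ:gluing}) without the subscript ``quotient'', and then adds a single observation --- that the homotopy of monopole maps used in those proofs sends the zero section of $F$ (flat connections with zero spinor) to $0$ in the target at every stage, so the whole argument descends to the quotient spaces. Your steps 1--2 instead sketch a re-derivation of the gluing theorem by neck stretching and fiberwise Seiberg--Witten compactness. That sketch is doubtful as stated: the Bauer--Furuta class is the stable homotopy class of the entire monopole map, not of its zero set, so compactness and ``decoupling of configurations with matching asymptotics'' do not by themselves produce a factorization of the map; Bauer's proof deliberately avoids gluing analysis and proceeds by a homotopy of the maps themselves. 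Since the non-quotient statement is available by citation (as you acknowledge at the outset), this detour buys nothing and, taken as a proof, would require substantial analysis you have not supplied.

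The genuine gap is in step 3. You verify that each factor map kills $Pic(X_i)$ and then conclude the identity of quotient invariants via the collapse map. But the gluing theorem only provides a (stable, $\text{Pin}(2)$-equivariant) \emph{homotopy} between the connected-sum monopole map and the smash of the factor maps; knowing that the two endpoint maps each send the Picard torus to $0$ does not imply that the induced maps on the quotients are stably homotopic. One must check that the homotopy itself sends the zero section to $0$ for all times, so that it descends to a homotopy of the quotient maps --- this is precisely the one sentence that constitutes the paper's proof, and it is absent from your argument. A smaller inaccuracy: your claim that the smash ``vanishes on the larger subspace $(Pic(X_1)\wedge TF_2)\cup(TF_1\wedge Pic(X_2))$'' is false, since on $Pic(X_1)\wedge TF_2$ the smash takes the varying values $(0,\mu_2(y))$; what you actually need, and correctly state immediately afterwards, is only compatibility through the evident collapse map, for which vanishing on $Pic(X_1)\times Pic(X_2)$ alone suffices.
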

\begin{proof}
In \cite{Bauer2}, Bauer proves a gluing theorem of the ordinay Bauer-Furuta invariant by performing a homotopy of the stable maps. In \cite{KM20} and \cite{Lin23}, such gluing theorem is extended to the families Bauer-Furuta invariant for a family of $4$-manifolds over a compact space. Their result is just Equation (\ref{equ:gluing}) without the subscript ``quotient''. The statement for the quotient invariant can be proved similarly, since the above homotopy always sends the zero section of the bunbdle $F$ (consists of flat connections with zero spinor) to the zero in the target.
\end{proof}

\section{The Bauer-Furuta invariant of homology tori}

\begin{theorem}
Suppose $X$ is a homology torus with 
\begin{equation}
\langle \alpha_1 \smile \alpha_2 \smile \alpha_3 \smile \alpha_4 , X \rangle = d,
\end{equation}
where $\{\alpha_i \}$ is a basis for $H^1(X;\Z)$, and $d$ is odd. Let $\ss$ be the $\text{spin}^c$ structure on $X$ with trivial determinent line. Then the nonequivariant Bauer-Furuta invariant $BF^{\{e\}} (X, \ss)$ is the generator of $\Z/2\Z$ in a group $4\Z \oplus \Z/2\Z$.
\end{theorem}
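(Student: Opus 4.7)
The plan is to determine the stable cohomotopy group $\{TF_0, S^{4m+3}\}$ by an Atiyah-Hirzebruch argument, and then use a Ruberman-Strle perturbation to pin down which class in this group is realized by the Bauer-Furuta map. I would carry this out in three steps.

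First, I would identify the bundle $F$ over $\Pic(X)=T^4$ via the family index theorem. Since $\sigma(X)=0$ the numerical index of $\D$ vanishes, but the family index has Chern character
\[
\mathrm{ch}(\mathrm{ind}\,\D)=\pi_{*}\!\bigl(\hat{A}(T_v)\cdot e^{c_1(\mathcal L)}\bigr),
\]
where $\mathcal L$ is the Poincar\'e line bundle on $\Pic(X)\times X$ with $c_1(\mathcal L)=\sum_i t_i\cup\alpha_i$ for $\{t_i\}$ the dual basis of $H^1(\Pic(X))$. Since $p_1(X)=0$ for a homology torus, the $\hat A$-factor contributes trivially; only the term $c_1(\mathcal L)^4/4!$ survives fiber integration, and a direct Koszul-sign expansion gives $\mathrm{ch}(\mathrm{ind}\,\D)=d\cdot t_1 t_2 t_3 t_4 \in H^4(\Pic(X);\Z)$. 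Thus $F$ is a virtual-rank-zero quaternion bundle whose top Chern-character class is detected by $d$.

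Second, I would compute $\{TF_0, S^{4m+3}\}$ via the Atiyah-Hirzebruch spectral sequence for stable cohomotopy. By the Thom isomorphism, $\tilde H^{4m+k}(TF_0)\cong H^k(T^4)$, so on the diagonal $p+q=4m+3$ the only nonzero $E_2$-entries are $E_2^{4m+3,0}=H^3(T^4;\Z)=\Z^4$ and $E_2^{4m+4,-1}=H^4(T^4;\Z/2)=\Z/2$. The only potentially nontrivial differential on this diagonal is $d_2=\mathrm{Sq}^2\circ\mathrm{red}_2$. Since $F$ is quaternionic, $w_1(F)=w_2(F)=0$; combined with the fact that $\mathrm{Sq}^2$ vanishes on $H^2(T^4;\Z/2)$ (which has exterior structure), this forces $d_2=0$, and higher differentials vanish for dimensional reasons. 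The extension $0\to\Z/2\to\{TF_0,S^{4m+3}\}\to\Z^4\to 0$ splits since $\Ext^1(\Z^4,\Z/2)=0$, so the target group is $\Z^4\oplus\Z/2$ as claimed.

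Third, I would pin down the Bauer-Furuta class itself. Since $b_2^+(X)-b_1(X)=-1<0$, the classical Seiberg-Witten invariant is not well-defined, but I would invoke the Ruberman-Strle perturbation~\cite{RS00} to make the perturbed moduli space empty away from the reducible locus; this forces the $\Z^4$-component, which measures signed degrees on the cells coming from $H^3(T^4)$, to vanish. The remaining $\Z/2$-component is determined by the restriction of the BF map to the top cell $S^{4m+4}$ of $TF_0$, which by Step~1 is controlled by the family Dirac and represents $\eta$ precisely when the mod-$2$ reduction of the top Chern character of $F$ is nonzero, i.e.\ precisely when $d$ is odd. I expect the main obstacle to be the rigorous identification in this last part: showing that the top-cell restriction represents $\eta$ rather than $2\eta=0$. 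I would attack this by analyzing the finite-dimensional approximation following Bauer, writing the top-cell restriction explicitly as a quadratic extension of the family Dirac operator and matching it with the classical description of $\eta\in\pi_1^S$ via the quadratic spinor term in the Seiberg-Witten equation.
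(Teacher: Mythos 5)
Your Steps 1 and 2 are correct and essentially reproduce the paper's computation: the paper fixes $F_0$ as the rank-one quaternion bundle over $T^4$ with $c_1=0$, $c_2=d$ (the same index-theorem computation you sketch is carried out in the paper for the connected-sum family), and it computes $\{TF_0,S^7\}\cong \Z^4\oplus\Z/2$ by showing the relevant attaching maps of $TF_0/TF_0^{(5)}$ vanish because $Sq^2(ux)=u\,w_2(F_0)x=0$; your AHSS packaging with $d_2=\mathrm{Sq}^2\circ\mathrm{red}_2$ killed by $w_2(F_0)=0$ and $\mathrm{Sq}^2|_{H^2(T^4;\Z/2)}=0$ is the same argument in cleaner clothing, and the splitting of the extension is fine.

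The genuine gap is in Step 3, which is where the theorem actually lives and where the hypothesis that $d$ is odd must enter. You assert that the top-cell restriction ``is controlled by the family Dirac and represents $\eta$ precisely when the mod-$2$ reduction of the top Chern character of $F$ is nonzero,'' but this is exactly the statement to be proved, and your proposed attack (writing the top-cell restriction as a quadratic extension of the family Dirac operator and matching it with $\eta$) is not carried out; you yourself flag it as the main obstacle. No purely index-theoretic or cell-level argument of the kind you describe can decide between $\eta$ and $0$ here: the needed input is geometric. In the paper it comes from the Ruberman--Strle analysis of the perturbed moduli space: the preimage of a generic regular value of the finite-dimensional approximation is a $1$-manifold consisting of free $S^1$-orbits of irreducible solutions, i.e.\ Lie-framed circles lying in single fibers of $TF_0$, and Ruberman--Strle's theorem that the mod~$2$ count of irreducible solutions equals $d \bmod 2$ is what makes the number of such circles odd, hence the top-cell component equal to $\eta$ (an even count would give $0$). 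Your use of the Ruberman--Strle perturbation is also aimed at the wrong component: you invoke it to make the moduli space ``empty away from the reducible locus'' so as to kill the $\Z^4$-part, but the $\Z^4$-part vanishes for the elementary dimensional reason that a generic preimage is a $1$-manifold and so contains no isolated framed points (this is how the paper argues), while emptiness of the irreducible locus would, if anything, suggest triviality of the $\Z/2$-component rather than prove it is $\eta$. To close the gap you must import the actual Ruberman--Strle computation (or an equivalent count of irreducible solutions mod $2$) and translate it, via Pontryagin--Thom on the fiberwise Lie-framed circles, into the statement that the restriction to the top cell is $\Sigma^5\eta$ and the degrees on the $7$-cells vanish.
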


\begin{proof}
First we compute the group that the invariant lives. Let $F_0$ be the bundle $\mathbb{H} \to F_0 \to \mathcal{T}^4$ with $c_1(T_0) = 0$ and $c_2(T_0) = d$. Let $TF_0$ be its Thom space. The equivariant Bauer-Furuta invariant is 
\[
\text{BF}^{\text{Pin}(2)} (X, \ss) \in \{ TF_0,  S^{\mathbb{H}+3\tilde{\R}} \}^{\text{Pin}(2)}.
\]
Forget the $\text{Pin}(2)$-action then we get the non equivariant Bauer-Furuta invariant
\[
\text{BF}^{\{e\}} (X, \ss):= \text{Res}^{\text{Pin}(2)}_{\{e\}} \text{BF}^{\text{Pin}(2)} (X, \ss) \in \{ TF_0,  S^7 \}.
\]

A sketch of the proof: $TF_0$ can be regarded as a CW complex with one $0$-cell, one $4$-cell (corresponds to $S^\mathbb{H}$), four $5$-cells (obtained from the $1$-cells of $\mathcal{T}^4$ by multiplying a $4$-cell corresponds to $S^\mathbb{H}$), six $6$-cells (obtained from the $2$-cells of $\mathcal{T}^4$ similarly), four $7$-cells (obtained from the $3$-cells of $\mathcal{T}^4$), one $8$-cell (obtained from the $4$-cell of $\mathcal{T}^4$). By a cofiber sequence and the CW approximation theorem, there is an isomorphism
\[
\{ TF_0,  S^7 \} \cong \{ TF_0/TF_0^{(5)},  S^7 \}
\]
where $TF_0^{(5)}$ is the $5$-th skeleton of $TF_0$. We want to show that all attaching maps of $TF_0/TF_0^{(5)}$ are trivial.

First by Thom isomorphism and the cohomology of $\mathcal{T}^4$, we deduce that all cells of $TF_0/TF_0^{(5)}$ survive in the cohomology group, so all adjacent attaching maps are trivial.

Now the only possible nontrivial attaching map is the one from the $8$-cell to $6$-cells. Since the only nontrival element of $\pi_1$ is the Hopf map $\eta$, which can be detected by the Steenrod square, it suffices to show that $Sq^2$ is trivial. Let $u$ be the Thom class of $F_0$. Note that $u$ is an element in $H^*(F_0, F_0- \mathcal{T}^4)$ represented by the zero section of $F_0$. The cup product of $H^*(F_0, F_0- \mathcal{T}^4)$ with $H^*(\mathcal{T}^4)$ still produces closed submanfolds, so $H^*(\mathcal{T}^4)$ acts on $H^*(F_0, F_0- \mathcal{T}^4)$. By Cartan formula we have for any $x \in H^*(\mathcal{T}^4)$
\begin{align*}
Sq^n(ux) &= \sum_{i+j = n} Sq^i (u) Sq^j(x) \\
&=  \sum_{i+j = n}u w_i(F_0) Sq^j(x)
\end{align*}
where $w_i$ is the $i$-th Stiefel–Whitney class. The cohomology of $\mathcal{T}^4$ is an exterior algebra generated by four $1$-classes which the Steenrod algebra acts on trivially. By the Cartan formula and the Adem relations, the Steenrod algebra acts on $H^*(\mathcal{T}^4)$ trivially. Hence $Sq^j(x) \neq 0$ iff $j=0$. So 
\[
Sq^2(ux) = uw_2(F_0)Sq^0(x).
\]
But $w_2(F_0) \equiv c_1(F_0) \mod 2$ and from the structure of $F_0$ we have $c_1(F_0) = 0$. So the attaching maps from the $8$-cell to $6$-cells are trivial.

Now we conclude that $TF_0/TF_0^{(5)}$ is equivalent to $6\S^6 \vee 4\S^7 \vee \S^8 $ in the stable category. Hence 
\[
\{ TF_0,  S^7 \} \cong \{ TF_0/TF_0^{(5)},  S^7 \}\cong [ 6\S^6 \vee 4\S^7\vee \S^8,  \S^7 ] \cong 4\Z\oplus\Z/2.
\]

By Ruberman-Strle~\cite{RS00}, the preimage of a generic point under $BF^{\{e\}} (X, \ss)$ is a Lie framed circle in a fiber of $TF_0$. So the restriction $BF^{\{e\}} (X, \ss)|_{\S^8}$ is a suspension of the Hopf map
\[
\Sigma^5 \eta: \S^8 \to \S^7.
\]
And the restrictions of $BF^{\{e\}} (X, \ss)$ on those $7$-cells have degree $0$, otherwise the preimage of a generic point would contain discrete points. Therefore,
\[
BF^{\{e\}} (X, \ss) = (0,0,0,0,1) \in 4\Z\oplus\Z/2.
\]

\end{proof}

\section{Family Bauer-Furuta invariant of the connected sum of two tori}

Suppose $X_1$ and $X_2$ are homology tori with determinant $r_1$ and $r_2$. Let $X = X_1 \# X_2$ and $\mathcal{T}^8 \cong H^1(X, S^1)$. Let $\ss$ be the $\text{spin}^c$ structure on $X$ with trivial determinant line. Denote the family of Dirac operators by $\D$. The index bundle $Ind({\D})$ is an $\mathbb{H}$-bundle over $\mathcal{T}^8 $ since $X$ is spin. Let $\mathbf{L} \to X \times \mathcal{T}^8 $ be the universal line bundle. Since the $\hat{A}$-genus of $X$ is zero, the Chern character of the index bundle is
\[
\text{ch}(Ind(\D)) =\int_X \text{ch}(\mathbf{L})
\]
by Atiyah-Singer. 

Suppose $\mathbf{L}$ is equipped with a connection 
\[ 
\mathbf{A} = 2\pi i \sum_{k=1}^{8} t_k \alpha_k
\]
where $\alpha_1, \dots, \alpha_4$ is a basis for $H^1(X_1,\Z)$ and $\alpha_5, \dots, \alpha_8$ is a basis for $H^1(X_2,\Z)$, and $t_k \mapsto 2\pi i t_k \alpha_k$ are coordinates on $\mathcal{T}^8 \cong H^1(X, i\R) /H^1(X, 2\pi i\Z)  $. Then the first Chern class of $\mathbf{L}$ is 
\[
\Omega = \sum_{k} \alpha_k\wedge dt_k.
\]
By the dimension reason 
\[
\text{ch}(\mathbf{L}) = 1 + \Omega + \frac12 \Omega^2 + \frac16 \Omega^3+ \frac{1}{24} \Omega^4.
\]
A term in $\text{ch}(\mathbf{L})$ contains a volume form of $X$ if and only if it contains $\alpha_1 \smile \alpha_2 \smile \alpha_3 \smile \alpha_4 $ or $\alpha_5 \smile \alpha_6 \smile \alpha_7 \smile \alpha_8 $. Therefore,
\[
\text{ch}(Ind(\D)) = \pm r_1 [vol_{\mathcal{T}^4_1}]\pm r_2 [vol_{\mathcal{T}^4_2}]
\]
where $\mathcal{T}^4_i \cong  H^1(X_i, S^1)$ are submanifolds of $\mathcal{T}^8$. Hence $c_2(Ind(\D)) = \pm r_1 [vol_{\mathcal{T}^4_1}]\pm r_2 [vol_{\mathcal{T}^4_2}]$ and $c_i(Ind(\D) ) = 0$ for $i\neq 2$. Hence
\[
\text{BF}^{\{e\}} (X, \ss) \in \{ TF,  S^{\mathbb{H} + 6\R} \}
\]
where $TF$ is the Thom space of the $\mathbb{H}$-bundle $\mathbb{H} \to F \to \mathcal{T}^8 $. Hence we have

\begin{lemma}
$BF^{\{e\}} ((X_1 \times S^1, \tilde{\ss}_1) \# (X_2 \times S^1, \tilde{\ss}_2^\tau)) \in \{S^{\R}\wedge TF , S^{\mathbb{H} + 6\R} \} = \{S^{\R}\wedge TF , S^{10} \}$
\end{lemma}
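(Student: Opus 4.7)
The plan is to identify the target group of the family Bauer--Furuta invariant by combining the general framework of Section~2 with the index bundle computation immediately preceding the statement. The content is largely bookkeeping: once the framework applies, one matches the general formula $\{S^{\R}\wedge TF,\, S^{n\mathbb{H}+b_2^+(X)\tilde{\R}}\}$ against the specific invariants of $X=X_1\#X_2$.

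First I would verify Assumption~\ref{assumption-holonomy-trivial} for the mapping torus $E_X$ of the Dehn twist $d$. Since the Dehn twist is supported in a neighborhood of the neck and is the identity on the complement, it acts trivially on $H^1(X;\Z)$ and $H^2(X;\Z)$; hence the holonomy of $E_X$ acts trivially on the cohomology of each fiber. This places the family invariant in the equivariant group $\{S^{\R}\wedge TF,\, S^{n\mathbb{H}+b_2^+(X)\tilde{\R}}\}^{\text{Pin}(2)}$, and after forgetting the $\text{Pin}(2)$-action (replacing $\tilde{\R}$ by $\R$) in $\{S^{\R}\wedge TF,\, S^{n\mathbb{H}+b_2^+(X)\R}\}$.

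Next I would compute the relevant topological invariants of $X$. Since each $X_i$ is a homology $4$-torus, $b_1(X_i)=4$, $b_2^{\pm}(X_i)=3$, and $\sigma(X_i)=0$. Additivity under connected sum gives $b_1(X)=8$, $b_2^+(X)=6$, and $\sigma(X)=0$, so $\Pic(X)=T^{b_1(X)}=T^8$ and the relation $m-n=-\sigma(X)/4$ forces $m=n$. Feeding in the index computation: $\mathrm{ch}(\mathrm{Ind}(\D))$ has vanishing degree-$0$ part (reflecting $\mathrm{ind}(\D)=-\sigma(X)/8=0$) and degree-$4$ part $\pm r_1[vol_{\mathcal{T}^4_1}]\pm r_2[vol_{\mathcal{T}^4_2}]$, so $\mathrm{Ind}(\D)$ is a virtual quaternion bundle of rank $0$ with only $c_2$ nontrivial. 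Taking the finite-dimensional approximation with $m=n=1$ realizes $F\to T^8$ as a genuine rank-one quaternion bundle with $c_1(F)=0$ and $c_2(F)=\pm r_1[vol_{\mathcal{T}^4_1}]\pm r_2[vol_{\mathcal{T}^4_2}]$. The target sphere is $S^{\mathbb{H}+6\tilde{\R}}$, which nonequivariantly becomes $S^{\mathbb{H}+6\R}\cong S^{10}$, giving the stated identification.

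I do not anticipate a serious obstacle; the only point worth attention is that one can take $m=n=1$ rather than a larger stabilization, which works because the virtual-rank-zero $K$-theory class $\mathrm{Ind}(\D)$ over $T^8$ is represented by the difference $F-\mathbb{H}$ of two genuine quaternion bundles with the specified Chern classes.
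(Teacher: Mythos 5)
Your argument is essentially the paper's own: the lemma is stated there as an immediate consequence of the Atiyah--Singer computation of $\mathrm{ch}(\mathrm{Ind}(\D))$ over $\mathcal{T}^8$ combined with the Section~2 framework (the holonomy assumption holds since the underlying bundle is trivial and the Dehn twist acts trivially on cohomology, $b_1(X)=8$, $b_2^+(X)=6$, $\sigma(X)=0$, hence $m=n$ and target $S^{\mathbb{H}+6\R}\cong S^{10}$), exactly as you lay out. The one point you flag --- representing the rank-zero virtual index class by a genuine rank-one quaternionic bundle $F$ with $c_1=0$ and the computed $c_2$ --- is passed over with the same brevity in the paper, which additionally remarks that the identification of the domain also follows from the gluing theorem for connected sums.
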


This also follows from the gluing theorem of Bauer-Furuta invariant, which asserts that the domain of the stable cohomotopy element for a connected sum is an external product of the domains of two elements.

\begin{theorem}
Suppose $X_1$ and $X_2$ are homology tori with odd determinant. Let $\ss_i$ be the $\text{spin}^c$ structure on $X_i$ with trivial determinant line bundle. Then the nonequivariant family Bauer-Furuta invariant $BF^{\{e\}} ((X_1 \times S^1, \tilde{\ss}_1) \# (X_2 \times S^1, \tilde{\ss}_2^\tau))\in \{S^{\R}\wedge TF , S^{10} \}$ is trivial.
\end{theorem}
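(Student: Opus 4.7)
The plan follows the strategy outlined in the introduction: first compute the characteristic classes of the index bundle $F$ from the Atiyah--Singer computation above, then extract a Hopf element $\nu$ in the stable CW structure of $TF$, and finally use the Atiyah--Hirzebruch spectral sequence to force the invariant to vanish. The analogy to keep in mind is that $\pi_3(\CP^2) = 0$ because the $4$-cell of $\CP^2$ is attached to the $2$-cell by $\eta$.

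First I would compute the Stiefel--Whitney classes of $F$. From $c_1(F) = 0$, $c_2(F) = \pm r_1 [\mathrm{vol}_{\mathcal{T}^4_1}] \pm r_2 [\mathrm{vol}_{\mathcal{T}^4_2}]$, and $c_i(F) = 0$ for $i \geq 3$, the Wu formula yields $w_1 = w_2 = w_3 = 0$ and $w_4(F) \equiv c_2(F) \pmod 2$, which is nonzero since $r_1$ and $r_2$ are both odd. Using the Cartan formula with the Thom isomorphism, together with the observation already exploited in Section 3 that the Steenrod algebra acts trivially on $H^*(\mathcal{T}^8; \mathbb{Z}/2)$, I get $Sq^i(ux) = u \cdot w_i(F) \cdot x$ for the Thom class $u$ and $x \in H^*(\mathcal{T}^8)$. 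Thus $Sq^2$ vanishes on $TF$ while $Sq^4$ does not. A subquotient collapsing argument (restricting the bundle to either $\mathcal{T}^4_1$ or $\mathcal{T}^4_2$ sitting inside $\mathcal{T}^8$ and then collapsing to the top stratum) then produces a stable subquotient of $TF$ equivalent to $\HP^2$, whose $8$-cell is attached to the $4$-cell by $\nu \in \pi_3^s$.

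Finally I would apply the Atiyah--Hirzebruch spectral sequence for $\{S^{\R} \wedge TF, S^{10}\}$. Only the cells of $S^{\R} \wedge TF$ in dimensions $10$ through $13$ can contribute, giving $E_2$-terms in $\pi_0^s$, $\pi_1^s$, $\pi_2^s$, and $\pi_3^s = \mathbb{Z}/24$ respectively. The nonequivariant gluing formula (the analogue of Equation (\ref{equ:gluing}) without the subscript ``quotient'') writes our invariant as a smash product built from the Hopf-type invariants of the individual tori computed in Section 3, concentrating its potentially nontrivial part on the top $13$-cell of $S^{\R} \wedge TF$ as an element of $\mathbb{Z}/24$. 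The $\nu$-attachment established in the previous step produces an AHSS differential that annihilates precisely this top-cell contribution, mirroring the $\CP^2$ analogy above; the lower-dimensional cells in the $E_2$ column carry the Hopf invariants $\eta, \eta^2$, which are killed by the vanishing of the relevant $Sq^2$ already noted. The hard part will be pinpointing the exact filtration degree at which the gluing-product invariant sits and verifying that the relevant AHSS differential is indeed composition with $\nu$ on the appropriate subquotient, which requires careful cellular bookkeeping of the attaching maps emerging from the nontriviality of $w_4(F)$.
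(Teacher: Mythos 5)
Your overall strategy is the same as the paper's (compute $w_4(F)$ from the index theorem, use $Sq^4$ on the Thom space to exhibit a $\nu$-attachment, and let the Atiyah--Hirzebruch spectral sequence kill the top-cell contribution), but the explicit $\nu$-attachment you produce is between the wrong pair of cells. The contribution you need to annihilate lives in $H^{13}(\Sigma TF;\pi_3(\S))$, and the only differential of $Sq^4$-type that can reach it is $d_4$ out of $H^{9}(\Sigma TF;\pi_0(\S))$; this differential is governed by the attaching map of the top $12$-cell of $TF$ onto the $8$-cells, which is detected by $Sq^4(u[\mathrm{vol}_{\mathcal{T}^4_j}])=\pm r_i\,u[\mathrm{vol}_{\mathcal{T}^8}]$, i.e.\ it uses the mixed product $[\mathrm{vol}_{\mathcal{T}^4_1}]\smile[\mathrm{vol}_{\mathcal{T}^4_2}]=[\mathrm{vol}_{\mathcal{T}^8}]$. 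Your $\HP^2$-type subquotient obtained by restricting $F$ to a single $\mathcal{T}^4_i$ records instead $Sq^4(u)=u\,w_4(F)$, the attachment of the $8$-cells onto the Thom $4$-cell; in the spectral sequence for maps to $S^{10}$ this controls a differential between columns $5$ and $9$, whose entries vanish for degree reasons, so it has no bearing on the $13$-cell. Moreover the top cell of $TF$ does not lie over $\mathcal{T}^4_i$, so no restriction to one factor can see the needed attachment. Your displayed formula $Sq^n(ux)=u\,w_n(F)x$ does contain the correct computation (take $x=[\mathrm{vol}_{\mathcal{T}^4_j}]$), but the cell-level conclusion you drew from it, and hence the differential you propose, is the wrong one; this is exactly the ``careful cellular bookkeeping'' you defer, and as set up it would not close.

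The treatment of the cells of dimension $10$--$12$ also has a gap. Saying that the $\eta$- and $\eta^2$-contributions ``are killed by the vanishing of the relevant $Sq^2$'' is backwards: $Sq^2=0$ on $TF$ means there are no $\eta$-type attaching maps, hence no corresponding differentials, so those $E_2$-entries survive; their vanishing must come from the invariant itself, not from the Steenrod action. The paper gets this from the Kronheimer--Mrowka geometric description: the preimage of a generic regular value is $\eta^2$ in a fiber of $TF$ smashed with a Lie-framed circle in $S^{\R}$, which forces the restriction of the invariant to every cell of dimension $\neq 13$ to be trivial. Your substitute --- the gluing formula together with Section 3 --- does not suffice as stated, because one smash factor is the family invariant of the twisted family $(X_2\times S^1,\tilde{\ss}_2^\tau)$, which is not computed in Section 3 (the gluing theorem only splits off the untwisted product factor), so the claimed concentration of the invariant on the top $13$-cell needs an independent argument such as the geometric one above.
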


\begin{proof}
First we compute the structure of $TF$: 
$TF$ can be regarded as a CW complex with one $0$-cell, one $4$-cell (corresponds to $S^\mathbb{H}$), several $n$-cells ($n=5,6,7,8,9,10,11$, obtained from the $(n-4)$-cells of $\mathcal{T}^8$ by multiplying the $4$-cell corresponds to $S^\mathbb{H}$), and one $12$-cell (obtained from the $8$-cell of $\mathcal{T}^8$).

We claim that the attaching maps from the $12$-cell to two of the $8$-cells are Hopf element $\nu$.  Let $u$ be the Thom class of $F$. Note that $u$ is an element in $H^*(F, F- \mathcal{T}^8)$ represented by the zero section of $F$. The cup product of $H^*(F, F- \mathcal{T}^8)$ with $H^*(\mathcal{T}^8)$ still produces closed submanfolds, so $H^*(\mathcal{T}^8)$ acts on $H^*(F, F- \mathcal{T}^8)$. By Cartan formula we have for any $x \in H^*(\mathcal{T}^8)$
\begin{align*}
Sq^n(ux) &= \sum_{i+j = n} Sq^i (u) Sq^j(x) \\
&=  \sum_{i+j = n}u w_i(F_0) Sq^j(x)
\end{align*}
where $w_i$ is the $i$-th Stiefel–Whitney class. The cohomology of $\mathcal{T}^8$ is an exterior algebra generated by four $1$-classes which the Steenrod algebra acts on trivially. By the Cartan formula and the Adem relations, the Steenrod algebra acts on $H^*(\mathcal{T}^8)$ trivially. Hence $Sq^j(x) \neq 0$ iff $j=0$. So 
\[
Sq^4(ux) = uw_4(F)Sq^0(x).
\]
But $w_4(F) \equiv c_2(F) \mod 2$ and from the structure of $F$ we have $c_2(F) = \pm r_1 [vol_{\mathcal{T}^4_1}]\pm r_2 [vol_{\mathcal{T}^4_2}]$.  Note that $[vol_{\mathcal{T}^4_1}]\cup  [vol_{\mathcal{T}^4_2}]=[vol_{\mathcal{T}^8}]$ and $[vol_{\mathcal{T}^4_i}]\cup  [vol_{\mathcal{T}^4_i}]=0$. Hence if $\{i,j\}=\{1,2\}$ and $r_i$ is odd, 
\[
Sq^4(u [vol_{\mathcal{T}^4_j}])=  u  [vol_{\mathcal{T}^4_i}]\cup  [vol_{\mathcal{T}^4_j}] = u [vol_{\mathcal{T}^8}]
\]
is dual to the $12$-cell. Since the Hopf element $\nu$ is detected by $Sq^4$, the attaching map from the $12$-cell (dual to $u [vol_{\mathcal{T}^8}]$) to a $8$-cell (dual to $u [vol_{\mathcal{T}^4_j}]$) is $\nu$.

Therefore, in $S^{\R}\wedge TF$, the attaching map from the $13$-cell to a $9$-cell is $\nu$. Denote the $13$-cell by $\sigma$ and the $9$-cell by $\tau$. Note that $\nu$ is the generator of $\pi^{-3}(*) = \pi_3(\S) = \Z/24$. We can regard $\sigma$ be the generator of 
\[{\pi^{10}(\Sigma TF^{(13)}/\Sigma TF^{(12)})} = {C^{13}(\Sigma TF;\pi^{-3}(*))},
\]
while $\tau$ is a generator of 
\[{\pi^{10}(\Sigma TF^{(9)}/\Sigma TF^{(8)})} = {C^{9}(\Sigma TF;\pi^{0}(*))},
\]
where $C^*$ is the cochain group. Consider the $E_1$ page of the Atiyah-Hirzebruch spectral sequence:
\[\begin{tikzcd}
	{C^{8}(\Sigma TF;\pi^0(*))}& {C^{9}(\Sigma TF;\pi^0(*))} & \cdots & {C^{13}(\Sigma TF;\pi^0(*))} \\
	&&& {C^{13}(\Sigma TF;\pi^{-1}(*))} \\
	&&& {C^{13}(\Sigma TF;\pi^{-2}(*))} \\
	&&& {C^{13}(\Sigma TF;\pi^{-3}(*))}
	\arrow["{d_1}",  from=1-1, to=1-2]
\end{tikzcd}\]
$\tau$ comes from an $8$-cell of $TF$, and by the homology of the torus and the Thom isomorphism theorem, it is nontrivial in the homology group. Hence $\tau$ survives to the $E_2$-page:
\[\begin{tikzcd}
	{} & {H^{9}(\Sigma TF;\pi^0(*))} & \cdots & {H^{13}(\Sigma TF;\pi^0(*))} \\
	&&& {H^{13}(\Sigma TF;\pi^{-1}(*))} \\
	&&& {H^{13}(\Sigma TF;\pi^{-2}(*))} \\
	&&& {H^{13}(\Sigma TF;\pi^{-3}(*))}
	\arrow["{d_4}", dashed, bend right=23, from=1-2, to=4-4]
\end{tikzcd}\]
Since all arrows point down to the right after the $E_2$-page, $\tau$ can not be hit by $d_i$ for $i>1$. Hence $\tau$ survives to the $E_4$-page. If $\sigma$ survives to the $E_4$-page, It must be $d_4(\tau)$, since the attaching map from $\sigma$ to $\tau$ is $\nu$ and $\nu$ is the generator of $\pi^{-3}(*) = \pi_3(\S) = \Z/24$. Hence $\sigma$ and therefore any element in ${H^{13}(\Sigma TF;\pi^{-3}(*))}$ doesn't survive to the $E_\infty$ page. So the $13$-cell can only be mapped to $S^{10}$ trivially. 

From the observation of Kronheimer-Mrowka~\cite{KM20}, the preimage of a generic point under $BF^{\{e\}} ((X_1 \times S^1, \tilde{\ss}_1) \# (X_2 \times S^1, \tilde{\ss}_2^\tau))$ is $\eta^2$ in a fiber of $TF$ smash a Lie framed circle in $\S^{\R}$. Hence for $n\neq 13$, any $n$-cell in $S^{\R}\wedge TF$ is mapped to $S^{10}$ trivially by $BF^{\{e\}} ((X_1 \times S^1, \tilde{\ss}_1) \# (X_2 \times S^1, \tilde{\ss}_2^\tau))$.
\end{proof}

\section{Equivariant family Bauer-Furuta invariant of the connected sum of two tori}

To address the dimension issue in the previous section, we can consider the $S^1$-equivariant Bauer-Furuta invariant. By the equivariant Hopf theorem, we can convert 
\[
BF^{\{S^1\}} ((X_1 \times S^1, \tilde{\ss}_1) \# (X_2 \times S^1, \tilde{\ss}_2^\tau)) \in \{S^{\R}\wedge TF , S^{2\mathbb{H} + 6\R} \}^{S^1}
\] to a nonequivariant stable mapping class, if $\{S^{\R}\wedge TF , S^{2\mathbb{H} + 6\R} \}^{S^1}$ has no fixed points. However, the base of the bundle $F$ is the Picard torus $\mathcal{T}^8$ fixed by the $S^1$-action. 

To address this issue we have to use a refinement of the Bauer-Furuta invariant (see Definition \ref{def:quotient_inv}):

\begin{definition}
For a spin manifold $X$, define its quotient Bauer-Furuta invariant of the $\text{Spin}^c$-structure $\ss$ to be:
\[
\text{BF}_\text{quotient}^{\text{Pin}(2)} (X, \ss) \in \{ TF_0/ Pic(X),  S^{n\mathbb{H}+b^+_2(X)\tilde{\R}} \}^{\text{Pin}(2)},
\]
where $TF_0$ is the Thom space of a rank $m$ quarternion bundle over $Pic(X) = T^{b_1(X)}$, such that 
\[
m-n = \frac{\sigma(X)}{4}.
\]
\end{definition}

For family invariant we can similarly define an invariant with domain modified. For example:
\begin{definition}
Define the quotient $S^1$-equivariant Bauer-Furuta invariant  of the Dehn twist on a sum of two homology tori to be:
\[
BF^{S^1}_\text{quotient} ((X_1 \times S^1, \tilde{\ss}_1) \# (X_2 \times S^1, \tilde{\ss}_2^\tau)) \in \{S^{\R}\wedge (TF / \mathcal{T}^8) , S^{\mathbb{H} + 6\R} \}^{S^1}.
\]
\end{definition}

These invariants work as well as the ordinary BF invariant, because in the Seiberg-Witten equation, the Picard torus is always mapped to zero (while the kernel of the index bundle might be mapped to nonzero self-dual $2$-form).

\subsection{Computation of the quotient $S^1$-equivariant Bauer-Furuta invariant}

Now consider the domain of this invariant. Let $S(F)$ be the sphere bundle of $F$. The fiber of $S(F)$ is $S(\mathbb{H})$. The structure map and the $S^1$-action on $S(F)$ are induced by those on $F$. Notice that the quotient space of $S(F)$ under the $S^1$-action is a bundle $S(F)/S^1$ with fiber $S^2$. Let $G$ be an $\R^3$-bundle whose sphere bundle is $S(G) =S(F)/S^1$. The fiber of $G$ is $\mathbb{H}/S^1$. We will prove

\begin{align}
\{S^{\R}\wedge (TF / \mathcal{T}^8) , S^{\mathbb{H} + 6\R} \}^{S^1} &= 
\{S^{\R}\wedge (TF /( \mathcal{T}^8\cup \{*\})) , S^{\mathbb{H} + 6\R} \}^{S^1}\label{equ1}\\
&= 
\{S^{2\R} \wedge( S(F)_+) , S^{\mathbb{H} + 6\R} \}^{S^1}\label{equ2}\\
&= \{ S^{2\R} \wedge( S(G)_+) , S^{ 4\R}\wedge S^{6\R} \}\label{equ5} \\
&= \{ S^{2\R} \wedge( S(G)_+) , S^{ 10\R} \} \\
&= \{ S^{2\R} \wedge S(G) , S^{ 10\R} \} \label{equ6}
\end{align}

Proof of equation (\ref{equ1}): Note that $TF / \mathcal{T}^8$ has two distinguished points: the infinity point obtained from the construction of the Thom space, and the point obtained from collapsing the torus $\mathcal{T}^8$. Let $S^0 \to TF / \mathcal{T}^8$ be the inclusion of these two points. Then we have the following cofiber sequence (in the stable homotopy category):
\[
\cdots \to S^0 \to TF / \mathcal{T}^8 \to TF /( \mathcal{T}^8\cup \{*\})) \to \Sigma S^0 = S^1 \to \cdots
\]
and therefore 
\[
\cdots \leftarrow \{S^0, S^{\mathbb{H} + 5\R} \}^{S^1} \leftarrow \{TF / \mathcal{T}^8 , S^{\mathbb{H} + 5\R} \}^{S^1} \leftarrow \{TF /( \mathcal{T}^8\cup \{*\})) , S^{\mathbb{H} + 5\R} \}^{S^1}  \leftarrow \{S^1, S^{\mathbb{H} + 5\R} \}^{S^1} \leftarrow \cdots
\]
is exact. By the equivariant Hopf theorem and the dimension reason, 
\[
\{S^0, S^{\mathbb{H} + 5\R} \}^{S^1} = \{S^1, S^{\mathbb{H} + 5\R} \}^{S^1} = \{1\}.
\]
Hence 
\begin{align*}
\{S^{\R}\wedge (TF / \mathcal{T}^8) , S^{\mathbb{H} + 6\R} \}^{S^1} &= \{ TF / \mathcal{T}^8 , S^{\mathbb{H} + 5\R} \}^{S^1}\\
&= \{TF /( \mathcal{T}^8\cup \{*\}) , S^{\mathbb{H} + 5\R} \}^{S^1} \\
&= \{S^{\R}\wedge (TF /( \mathcal{T}^8\cup \{*\})) , S^{\mathbb{H} + 6\R} \}^{S^1}.
\end{align*}

Proof of equation (\ref{equ2}): To form $TF /( \mathcal{T}^8\cup \{*\}) $, we first take the disk bundle $D(F)$ and then collapse its boundary and its zero section. This is equivalent to collapse the boundary of $I \times  S(F)$. To form the latter, we first take the product $S^{\R} \times  S(F)$ and then collapse $\{1\}\times  S(F)$. Hence we have (imagine a low dimension analogue where $F$ is a rank $1$ bundle over a circle)
\[
TF /( \mathcal{T}^8\cup \{*\}) = S^{\R}\wedge S(F)_+.
\]
Hence we have 
\[
\{S^{\R}\wedge (TF /( \mathcal{T}^8\cup \{*\})) , S^{\mathbb{H} + 6\R} \}^{S^1} = \{S^{2\R} \wedge( S(F)_+), S^{\mathbb{H} + 6\R}\}^{S^1}.
\]

Proof of equation (\ref{equ5}): 
\[
\{S^{2\R} \wedge( S(F)_+) , S^{\mathbb{H} + 6\R} \}^{S^1}
= \{ S^{2\R} \wedge( S(G)_+) , S^{ 4\R}\wedge S^{6\R} \}.
\]
For any subgroup $H$ of a compact Lie group $G$, we have (see the Handbook of homotopy theory, Proposition 17.3.17)
\[
\{G_+\wedge_H X,Y\}^G \cong \{ X, i_H^* Y\}^H
\]
where $X$ is an object in the $H$-Spanier-Whitehead category and $Y$ is an object in the $G$-Spanier-Whitehead category, and $i^*_H$ is the forgetting functor. In our case we just have $G= S^1$, $H = \{e\}$, $X =S(G)_+$, $Y=S^{\mathbb{H} +6\R}$, and $i_{\{e\}}^* Y=S^{4\R +6\R}$. Notice that $G_+\wedge X$ is actually $S^1_+\wedge S(G)_+ = (S^1 \times S(G))_+$, where $S^1 \times S(G)$ on each fiber is a trivial fibration $S^1 \times S^2$. What we want is $S(F)$, which on each fiber is a Hopf fibration $S^3$. But the proof is similar, based on the condition that the action on the domain is free away from the base point.

Proof of Equality (\ref{equ6}): Because of the dimension reason, we can use a cofiber sequence similar to the first one.

We have prove that
\begin{theorem}
We can convert the quotient $S^1$-equivariant Bauer-Furuta invariant to a nonequivariant element in
\[
\{S^{\R}\wedge (TF / \mathcal{T}^8) , S^{\mathbb{H} + 6\R} \}^{S^1} \cong \{ S^{2\R} \wedge S(G) , S^{ 10\R}\}. 
\]
\end{theorem}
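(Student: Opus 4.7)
The plan is to establish the displayed isomorphism as a composition of the identifications labeled (\ref{equ1})--(\ref{equ6}) in the preceding discussion, each reducing the domain or simplifying the equivariance in a small step. The overall strategy is: first eliminate the fixed-point locus $\mathcal{T}^8$ that obstructs applying the equivariant Hopf theorem directly, then trade the Thom space picture for a sphere bundle, then exploit freeness of the $S^1$-action on that sphere bundle to pass to the nonequivariant setting.

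First I would carry out step (\ref{equ1}). The inclusion of the two distinguished basepoints of $TF/\mathcal{T}^8$ (the Thom space infinity and the collapsed torus) gives a map $S^0 \to TF/\mathcal{T}^8$, whose cofiber is $TF/(\mathcal{T}^8 \cup \{*\})$. The induced long exact sequence of $S^1$-equivariant stable maps into $S^{\mathbb{H}+5\R}$ will show the desired equality, provided the boundary groups $\{S^0, S^{\mathbb{H}+5\R}\}^{S^1}$ and $\{S^1, S^{\mathbb{H}+5\R}\}^{S^1}$ vanish. These both vanish by the equivariant Hopf theorem together with dimension counting, since the target sphere has strictly higher (real) virtual dimension. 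Step (\ref{equ2}) is then a purely geometric identification: quotienting the disk bundle $D(F)$ by both its boundary $S(F)$ and zero section $\mathcal{T}^8$ is the same as collapsing the boundary of $I \times S(F)$, which exhibits $TF/(\mathcal{T}^8 \cup \{*\}) \cong S^{\R} \wedge S(F)_+$.

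The central step is (\ref{equ5}), where the $S^1$-equivariance is discharged. Because the $S^1$-action on each fiber $S(\mathbb{H})=S^3$ of $S(F)$ is the Hopf action, the action on $S(F)$ is free, and the quotient is the $S^2$-bundle $S(G)$. The Wirthmüller-type identification (Handbook of Homotopy Theory, Proposition 17.3.17) converts $S^1$-equivariant stable maps out of a free $S^1$-space into nonequivariant stable maps out of the quotient; here this turns $\{S^{2\R} \wedge S(F)_+, S^{\mathbb{H}+6\R}\}^{S^1}$ into $\{S^{2\R} \wedge S(G)_+, S^{4\R+6\R}\}$, using that the fixed space of $S^{\mathbb{H}+6\R}$ under $S^1$ (where $S^1 \subset \mathrm{Pin}(2)$ acts on $\mathbb{H}$ by scalar multiplication) is $S^{6\R}$. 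The main technical obstacle is that $S(F) \neq S^1 \times_{\{e\}} S(G)$ as $S^1$-spaces, so the Handbook proposition does not apply literally; instead one must argue directly from the freeness of the $S^1$-action, working fiberwise and gluing, exactly as Adams' Theorem 5.3 quoted in the introduction.

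Finally, step (\ref{equ6}) replaces $S(G)_+$ by $S(G)$ using another cofiber sequence argument with the basepoint inclusion $S^0 \to S(G)_+$, whose cofiber is $S(G)$; the boundary terms vanish for dimensional reasons. Assembling (\ref{equ1})--(\ref{equ6}) gives the claimed chain of isomorphisms, and the conversion of the quotient invariant from equivariant to nonequivariant is then obtained by tracking its image under the composition. I expect the Wirthmüller step to be the only genuine content; the remaining steps are cofiber sequences and formal identifications of spaces.
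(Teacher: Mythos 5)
Your proposal follows the paper's own proof essentially step for step: the cofiber sequence killing the two distinguished points for (\ref{equ1}), the identification $TF/(\mathcal{T}^8\cup\{*\})\cong S^{\R}\wedge S(F)_+$ for (\ref{equ2}), the free-action/Wirthm\"uller-type reduction (with the same caveat that $S(F)$ is not literally $S^1_+\wedge S(G)_+$, so one argues directly from freeness as in Adams) for (\ref{equ5}), and a final cofiber sequence for (\ref{equ6}). The only small wobble is your aside about the $S^1$-fixed points of $S^{\mathbb{H}+6\R}$ being $S^{6\R}$ --- the step actually uses the forgetful functor $i^*_{\{e\}}$, giving the nonequivariant target $S^{10\R}$ as you correctly display --- but this does not affect the argument, which matches the paper's.
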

Now it's easy to analyze the CW structure of the domain $S(G)$. We compute the dimension of the cells:
\begin{align*}
    \mathcal{T}^8&\text{              }& S(\mathbb{H})/S^1 \\
   0 && 0 \text{              }\\
  1&&2\text{              }\\
   2&&\\
     3&&\\
      4&&\\
       5&&\\
        6&&\\
         7&&\\
         8&&\\
\end{align*}
Again since the dimension of the target is $10$, we only need to consider $9$-, $10$-, $11$-, and $12$-cells of the domain $S^{2\R} \wedge S(G) $. They come from the $7$-, $8$-, $9$-, and $10$-cells of $S(G)$. The structure of $G$ is induced from the structure of $F$, so all Stiefel–Whitney classes vanish, and therefore all Steenrod squares vanish. The Hopf elements $\eta$ is detected by the Steenrod square, hence the only possible nontrivial attaching maps are the ones from the $10$-cell to $7$-cells in $S(G)$. 

In general such attaching maps can be $\eta^2$, but in this case, those cells are obtained from the bundle $G$ and the attaching maps come from the structure of $G$. The top cell of $S(G)$ is the product of the top cell (we denote it by $\sigma^8$) of $\mathcal{T}^8$ and the $2$-cell of the fiber $S(\mathbb H) / S^1 = S^2$, while $7$-cells of $S(G)$ correspond to products of $5$-cells of $\mathcal{T}^8$ and the $2$-cell of the fiber $S(\mathbb H) / S^1 = S^2$. 

Let $\sigma^5 \times D^2$ be any one of the $7$-cells of $TG$. Consider the attaching map from $\partial  (\sigma^8 \times D^2)$ to $\sigma^5 \times D^2$. Pick any generic point $x$ on $\sigma^5$. The unit sphere of the normal bundle at $x\in \sigma^5 \subset \mathcal{T}^8$ is a $2$-sphere, but $\pi_2(SO(3))$ is trivial. This means that for any point $y$ in the fiber over $x$, the preimage of $y$ under the attaching map is a trivial framed sphere. Hence the attaching map is trivial.

Now each cell in $(S^{2\R} \wedge S(G)  )/ (S^{2\R} \wedge S(G) )^{(8)}$ has trivial attaching map stably. By the Atiyah-Hirzebruch spectral sequence,
\[
\{ S^{2\R} \wedge S(G)  , S^{ 10\R} \} \cong  \binom{8}{0}\pi^{10}(S^{12}) \oplus\binom{8}{1}\pi^{10}(S^{11}) \oplus\binom{8}{2}\pi^{10}(S^{10}) .
\]

From the observation of Kronheimer-Mrowka~\cite{KM20}, the preimage of a generic point under 
\[
BF^{\{e\}} ((X_1 \times S^1, \tilde{\ss}_1) \# (X_2 \times S^1, \tilde{\ss}_2^\tau))
\]
is a product between $\eta \times \eta'$ in a fiber of $TF$ and a Lie framed circle $\eta''$ in $\S^{\R}$. The group $S^1 $ acts on the torus $\eta \times \eta'$, and $(\eta \times \eta')/S^1$ is a Lie framed circle in a fiber of $TG$. We conclude that 
\[
BF^{\text{Pin}(2)}_\text{quotient} ((X_1 \times S^1, \tilde{\ss}_1) \# (X_2 \times S^1, \tilde{\ss}_2^\tau)) =( (\eta \times \eta')/S^1 )\times \eta''
\]
is the generator of $\pi^{10}(S^{12}) \subset \{ S^{\R} \wedge  TG , S^{ 10\R} \}$.

Hence by Proposition \ref{prop:free-invariant-detect-pi-0-Diff}, the main theorem \ref{main-thm} follows.

\bibliographystyle{alpha}
\bibliography{./T4}
\end{document}